\documentclass[11pt]{article}
\usepackage{amsthm}
\newtheorem{tm}{Theorem}[section]
\newtheorem{lm}[tm]{Lemma}

\newtheorem{rmk}[tm]{Remark}
\newtheorem{cor}[tm]{Corollary}

\newtheorem{??}[tm]{Question}
\newtheorem{defi}[tm]{Definition}

\newtheorem{conj}[tm]{Conjecture}

\usepackage{graphicx}
\usepackage{amssymb}
\usepackage{import}
\usepackage{xifthen}
\usepackage{pdfpages}
\usepackage{transparent}
\usepackage{color}

\font\tenmsb=msbm10
\font\sevenmsb=msbm7
\font\fivemsb=msbm5

\newfam\msbfam
\textfont\msbfam=\tenmsb
\scriptfont\msbfam=\sevenmsb
\scriptscriptfont\msbfam=\fivemsb
\def\Bbb#1{{\fam\msbfam #1}}

\font\teneufm=eufm10
\font\seveneufm=eufm7
\font\fiveeufm=eufm5
\newfam\eufmfam
\textfont\eufmfam=\teneufm
\scriptfont\eufmfam=\seveneufm
\scriptscriptfont\eufmfam=\fiveeufm

\newcommand\n{\noindent}

\newcommand\comp{{\mathbb{C}}}
\newcommand\real{{\Bbb R}}

\newcommand\ball{{\mathbb{B}}}
\newcommand\sph{{\mathbb{S}}}

\title{Convex Hull Volumes in Hyperbolic 3-Space}
\author{Cameron MacMahon}
\date{April 2026}

\begin{document}

\maketitle

\begin{abstract}
In this paper we provide a geometric condition satisfied by certain closed subsets of the Riemann sphere which implies that their hyperbolic convex hulls in $\mathbb{H}^3$ have infinite volume.  As a corollary, we characterize continua in the Riemann sphere whose hyperbolic convex hulls have infinite volume, answering a question of Danny Calegari. Furthermore, we give a geometric characterization of planar self-similar sets whose hyperbolic convex hulls have infinite volume.

\end{abstract}

\section{Introduction}

It was claimed in \cite{bis} that the hyperbolic convex hull in $\mathbb{H}^3$ of a Jordan curve $\Gamma$ in the Riemann sphere $\hat{\comp}$ has infinite volume except when $\Gamma$ is a circle. More generally, Danny Calegari asked whether or not it is possible to characterize those continua (compact, connected sets) in $\hat{\comp}$ whose hyperbolic convex hulls have infinite volume. In this work we provide such a characterization:

\begin{tm}
    The volume of the hyperbolic convex hull of a continuum $K$ in the Riemann sphere is either zero or infinity, and is zero if and only if $K$ lies on a circle.
\end{tm}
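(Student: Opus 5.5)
The proof has two directions. If $K$ lies on a circle $C$, then the hyperbolic convex hull of $K$ is contained in the hyperbolic plane bounded by $C$, a totally geodesic surface, so it has three-dimensional volume zero. For the converse, assume $K$ is a continuum not lying on any circle; the goal is to show the hull $\mathrm{CH}(K)$ has infinite volume. This settles the dichotomy: zero when $K$ lies on a circle, infinite otherwise.

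\textbf{Step 1: reduce to a local geometric condition.} I would prove a general criterion for closed sets and then check that non-circular continua satisfy it. The criterion is: suppose there is a point $p\in K$ and a sequence of scales $r_n\to 0$ such that, after rescaling a neighbourhood of $p$ by the M\"obius map $z\mapsto (z-p)/r_n$, the images of $K$ contain four points that are uniformly non-concyclic. Concretely, the cross-ratio of these four points should stay a bounded distance away from $\real$, and their mutual spherical distances should be bounded below.

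Four such points span an ideal tetrahedron. Its volume is bounded below by a constant depending only on the cross-ratio bound, and it lies inside the hull. The rescaling is a hyperbolic isometry, so the original hull contains a tetrahedron of volume at least $c>0$ located near the point $(p,r_n)$ of the upper half-space model. Passing to a subsequence with $r_{n+1}\ll r_n$ makes these tetrahedra essentially disjoint: their cores sit at heights $\asymp r_n$ and are separated by hyperbolic distance $\to\infty$. Summing the volumes gives infinite volume.

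\textbf{Step 2: verify the criterion for a continuum not on a circle.} This is the main obstacle. Connectedness is crucial here, since a finite set of four non-concyclic points has a finite-volume hull. The plan is as follows.

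First, if $K$ is a single point it lies on a circle, so $K$ is nondegenerate. Pick an accumulation point $p$. Because $K$ is connected, for every small $r$ the set $K$ meets the circle $|z-p|=r$, so the rescaled sets $K_r=(K-p)/r$ all meet the unit circle and contain $0$. Take a Hausdorff limit $L$ of $K_r$ (on the sphere, passing to subsequences). Then $L$ is a continuum containing $0$, $\infty$, and a point of the unit circle.

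If $L$ does not lie on a circle, it contains four non-concyclic points. Hausdorff convergence then gives the required uniformly non-concyclic quadruples in $K_{r_n}$.

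The difficulty is when every tangent limit at every point is contained in a line. This mirrors the Jordan-curve heuristic: $K$ is then ``tangentially circular'' everywhere, that is, infinitesimally linear at each point. I would handle this case by choosing the scale well rather than letting $r\to0$ freely. Since $K$ is not on a circle, pick four points $a,b,c,d\in K$ that are not concyclic. Normalize by a M\"obius map so that $a,b,c$ lie on $\real\cup\{\infty\}$ and $d\notin\real$.

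Consider the family of scales and base points along $K$, and look at the ``circularity defect'' function
\[ \delta(q,r)=\sup\{\text{dist of } K_{q,r}\cap \overline{\ball} \text{ from the best circle}\}. \]
This function is continuous in $(q,r)$. It is small for tiny $r$, by the tangent-line assumption and a compactness argument, and it is bounded below for some macroscopic configuration. An intermediate-value/compactness argument, using connectedness of $K$ to link the scales, then produces infinitely many disjoint pairs $(q_n,r_n)$ at which $\delta(q_n,r_n)$ equals a fixed $\epsilon$. At these scales we obtain uniformly non-concyclic quadruples, and the volume-summing argument of Step 1 applies. The hardest part is making these windows disjoint in $\mathbb{H}^3$. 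If this fails, my fallback is the alternative route of building one region of infinite volume directly: take the hull of the continuum near $d$ together with the arc structure of $K$ through $a,b,c$, using that a continuum joining $d$ to the circle through $a,b,c$ forces a ``fin'' whose hyperbolic width does not decay.
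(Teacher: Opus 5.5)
\textbf{The gap is in the remaining case of Step 2, and your route cannot close it.} Your first direction is correct. Your Step 1 criterion is a valid sufficient condition, provided you compare compact cores of the tetrahedra rather than the full ideal tetrahedra. Disjointness is not the real difficulty: once the heights tend to $0$, a subsequence of the base points is pairwise far apart in $\mathbb{H}^3$. The case where some tangent limit is not on a circle is also fine.

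In the remaining case, the Step 1 criterion fails for the most basic examples, whatever base points and scales you choose. Take $K$ an ellipse, or any $C^1$ Jordan curve that is not a circle.
\begin{itemize}
\item For $q\in K$ and $r\to 0$, the set $(K-q)/r$ is within $o(1)$ of the extended tangent line in the spherical metric, uniformly in $q$. On $|w|\le M$ this follows from $C^1$-smoothness, and outside that disc the set is within $O(1/M)$ of $\infty$.
\item Hence every quadruple in it with mutual spherical distances $\ge m$ has cross-ratio within $o(1)$ of $\mathbb{R}$. Your defect $\delta(q,r)$ therefore tends to $0$ uniformly, as you yourself anticipate.
\item So the windows $(q,r)$ where a quadruple as in Step 1 exists have $r$ bounded above and below, and lie in a compact part of $\mathbb{H}^3$. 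Only finitely many of them are pairwise far apart.
\end{itemize}
No intermediate-value argument can then produce infinitely many disjoint windows at a fixed $\epsilon$. The same estimate, applied at a suitable centre of an ideal tetrahedron of volume $\ge c$ (whose shape then lies in a compact set), shows more. The hull of an ellipse contains only finitely many pairwise disjoint ideal tetrahedra with vertices on $K$ and volume $\ge c$. Your fallback fails for the same reason: for a $C^2$ curve the hull near Euclidean height $h$ has hyperbolic thickness $O(h)$, so the ``fin'' does thin out.

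\textbf{The missing idea} is that for such $K$ the volume diverges through infinitely many disjoint pieces whose individual volumes tend to $0$, and you must count them. This is what the paper does in Theorem 3.4 with Corollaries 3.6 and 3.7.
\begin{itemize}
\item Since $K$ is not on a circle, $CH(K)$ has interior (Theorem 3.2). After an isometry it contains a ball $B(0,\delta)$ in the ball model.
\item The hull of this ball and a point $x\in K$ is a horn whose cross-section at Euclidean depth $\sigma$ has radius $\asymp\sigma^2$. Set $\sigma_j^2=\lambda^j$. Between depths $\sigma_{j+1}$ and $\sigma_j$ the horn has volume $\asymp\lambda^j$ (Section 2).
\item Horns over the centres of a packing of $K$ at scale $\asymp\lambda^j$ are disjoint in that shell.
\item A continuum has $P_\epsilon(K)\gtrsim \mathrm{diam}(K)/\epsilon$. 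Indeed, for $p\in K$ it meets every circle $|z-p|=3k\epsilon$ with $3k\epsilon<\mathrm{diam}(K)/2$, and points on distinct such circles are more than $2\epsilon$ apart.
\end{itemize}
So each shell contributes volume $\gtrsim 1$, and the sum over shells diverges. Equivalently, the $\sigma^2$-neighbourhood of $K$ has area $\gtrsim \sigma^2\,\mathrm{diam}(K)$, and $\int_0^{c}\sigma^2\cdot\sigma^{-3}\,d\sigma=\infty$. Tetrahedra of volume bounded below, which is all your approach produces, cannot detect this logarithmic divergence.
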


This follows from Theorem 3.4 below, which gives a condition on a closed subset $E \subset \hat{\comp}$ that, when satisfied, implies that the hyperbolic convex hull of $E$ has infinite volume. This condition will be satisfied if the one dimensional Hausdorff measure of $E$ is positive and $E$ does not lie on a circle. Investigating whether or not this condition is necessary, we discuss hyperbolic convex hull volumes of planar self-similar sets. We also characterize planar self-similar sets whose hyperbolic convex hulls have infinite volume:

\begin{tm}
    The volume of the hyperbolic convex hull of a self-similar set $K \subset \hat{\comp}$ is either zero or infinity, and is zero if and only if $K$ lies on a circle.
\end{tm}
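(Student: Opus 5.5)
The plan is to split according to whether $K$ lies on a circle, and in the non-circular case to bootstrap infinite volume from self-similarity.

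\textbf{Circular case.} If $K$ lies on a circle $C$, then the hyperbolic convex hull of $K$ lies in the totally geodesic plane bounded by $C$. That plane has zero three-dimensional volume, so the hull has volume zero.

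\textbf{Non-circular case: finding positive volume.} Suppose $K$ does not lie on a circle. Since $K$ is the attractor of an IFS $\{f_1,\dots,f_m\}$ of contracting similarities of $\comp$, it is compact and lies in $\comp\subset\hat{\comp}$. Not lying on a circle means $K$ contains four points $a,b,c,d$ that are not concircular. (A set in which every four points are concircular lies on a circle, by fixing three non-collinear-or-generic points.) The ideal tetrahedron with vertices $a,b,c,d$ is then non-degenerate and has some volume $v>0$. It is contained in the hull $\mathrm{CH}(K)$.

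\textbf{Non-circular case: producing infinitely many disjoint copies.} Each composition $f_w=f_{w_1}\circ\cdots\circ f_{w_n}$ is a similarity of $\comp$. It therefore extends, via the Poincar\'e extension, to a hyperbolic isometry $\tilde f_w$ of $\mathbb{H}^3$, and this isometry fixes $\infty$. Since $f_w(K)\subset K$, the image $\tilde f_w(\mathrm{CH}(K))=\mathrm{CH}(f_w(K))$ lies in $\mathrm{CH}(K)$. Hence $\mathrm{CH}(K)$ contains the isometric copies $T_w=\tilde f_w(T)$ of the ideal tetrahedron, each of volume $v$. It remains to find infinitely many of these copies that are pairwise essentially disjoint, or at least overlap with bounded multiplicity.

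Pick a single map, say $f=f_1$, with contraction ratio $r<1$ and fixed point $p$. The Poincar\'e extension $\tilde f$ is loxodromic/hyperbolic, with axis the vertical geodesic through $p$, and it translates along that axis by $\log(1/r)$. Choose a compact piece $Q\subset T$ of volume at least $v/2$, staying a bounded hyperbolic distance from some point on the axis. Then the sets $\tilde f^{\,n}(Q)$ lie near points on the axis spaced $n\log(1/r)$ apart. Because $\tilde f$ acts as a translation along the axis (composed with a rotation), for $N$ large the sets $\tilde f^{\,nN}(Q)$, $n\ge 0$, are pairwise disjoint: they sit in disjoint slabs of height between horospheres centred at $\infty$, in the upper half-space model. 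All of these sets lie in $\mathrm{CH}(K)$, so the volume is at least $\sum_n v/2=\infty$.

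\textbf{Expected obstacle.} The delicate points are verifying disjointness cleanly and confirming that no open-set condition is needed. I expect the slab argument to handle disjointness: $Q$ lies between heights $h_1<h_2$, and $\tilde f^{\,k}$ multiplies heights by $r^k$, so choosing $N$ with $r^N h_2<h_1$ makes the images disjoint. This argument never uses the open set condition, only $f(K)\subset K$. The dichotomy therefore holds for every self-similar set, not just those satisfying the open set condition.
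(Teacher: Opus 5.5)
Your proof is correct. It reaches the key step, infinitely many disjoint isometric copies of a positive-volume set inside $\mathrm{CH}(K)$, by a different mechanism than the paper.

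The paper (Theorem 5.4) separates the copies by hyperbolic planes. It picks a point $p\in K$ distinct from $a,b,c,d$ and follows an itinerary of $p$ to get a composition $S=S_{i_M}\circ\cdots\circ S_{i_1}$ with $S(K)$ inside a small closed disc $A$ whose interior misses $a,b,c,d$. It adjoins $S$ to the family of similarities. It then uses the plane over $\partial A$, and its images under powers of $S$, to show that the whole ideal tetrahedra $S^n(T)$ are pairwise disjoint.

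You instead take an arbitrary generator $f$ and use its explicit Poincar\'e extension $(z,t)\mapsto(f(z),rt)$. You truncate $T$ to a compact piece $Q$ in a slab $h_1\le t\le h_2$ and pass to a power $\tilde f^{\,N}$ with $r^N h_2<h_1$. This puts the iterates in pairwise disjoint horospherical slabs.

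Your route is shorter. It avoids the auxiliary point $p$ and the modification of the iterated function system. It also sidesteps the measure-zero issue of $S(K)$ or $T$ touching the separating plane. The only thing you give up is that you get disjoint compact pieces of the tetrahedra rather than disjoint whole tetrahedra, which does not matter for infinite volume. The paper's separating-disc argument is the same device used for the Cantor bridge sets in Lemma 4.1, so it keeps the exposition uniform.

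Your circular case and the reduction to four non-concircular points are handled as in the paper (Theorem 3.2 and the remark before Corollary 5.5).
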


 It is the author's belief that the methods in this paper can be tweaked to decide the question of finite or infinite hyperbolic convex hull volume for other classes of sets in the plane and/or particular sets. In the case of continua motivation for Theorem 1.1 comes from hyperbolic geometry, in particular the discussion following Theorem 6.24 in \cite{cal}. The proof of Theorem 6.24 (whose statement is beyond the scope of this work) would be greatly simplified if one could prove the following $``$No Wandering Continua" conjecture (Conjecture 6.29 in \cite{cal}):

\begin{conj}
    Let $G$ be a co-compact Kleinian group. Suppose $K \subset \hat{\comp}$ is a continuum which is disjoint from all of its $G$-translates. Then $K$ consists of a single point. 
\end{conj} 

As stated in \cite{cal}, it is possible to prove Conjecture 1.3 in the case when $K$ contains an arc of a round circle by a theorem of Zeghib \cite{zeg}. Calegari \cite{cal2} suggested proving Theorem 1.1 so that it suffices to go about proving Conjecture 1.3  with the additional assumption that the hyperbolic convex hull of $K$ has infinite volume. 

\bigskip

A brief remark on notation. When $A$ and $B$ are quantities that both depend on some parameter, we use the symbol $A \gtrsim B$ to denote that there exists a constant $C$ independent of the parameter so that $CA \geq B$. The notation $A \lesssim B$ has a similar meaning. We take $A \asymp B$ to mean that there is a constant $c$ independent of the parameter so that $c^{-1}A \leq B \leq cA$. 

\bigskip

\noindent $\bf{Acknowledgements:}$ I would like to thank my advisor Christopher Bishop for his support and guidance during the course of this project, as well as for reading multiple drafts of the paper. I also thank Danny Calegari for kindly explaining the relation of this paper's contents to Conjecture 1.3.

\section{Geometry of Cuspidal Hulls}

\n

We begin with some notation (see Figure $1$). Denote by $\ball$  hyperbolic 3-space in the Poincar\'{e} ball model, and let $\sph$ denote its 2-sphere at infinity. Let  $D_\delta = B(0, \delta)$ be the ball of hyperbolic radius $\delta$ at $0$, and let $x \in \sph$. Let $\Sigma_\sigma$ stand for the Euclidean sphere of radius $(1-\sigma)$ centered at zero, and let $K_\sigma$ denote the closed Euclidean ball centered at $0$ of radius $(1-\sigma)$. Finally, let $C_x$ be the hyperbolic convex hull of $D$ and $x$, and let $\gamma_x$ stand for the unique geodesic ray issuing from $0$ and landing at $x$. Note that $C_x$ is a hyperbolic $  ``$cusp" neighborhood of $\gamma_x$. 

\bigskip

\begin{figure}[ht]
    \centering
    \def\svgwidth{.6\textwidth}
\begingroup%
  \makeatletter%
  \providecommand\color[2][]{%
    \errmessage{(Inkscape) Color is used for the text in Inkscape, but the package 'color.sty' is not loaded}%
    \renewcommand\color[2][]{}%
  }%
  \providecommand\transparent[1]{%
    \errmessage{(Inkscape) Transparency is used (non-zero) for the text in Inkscape, but the package 'transparent.sty' is not loaded}%
    \renewcommand\transparent[1]{}%
  }%
  \providecommand\rotatebox[2]{#2}%
  \newcommand*\fsize{\dimexpr\f@size pt\relax}%
  \newcommand*\lineheight[1]{\fontsize{\fsize}{#1\fsize}\selectfont}%
  \ifx\svgwidth\undefined%
    \setlength{\unitlength}{841.88976378bp}%
    \ifx\svgscale\undefined%
      \relax%
    \else%
      \setlength{\unitlength}{\unitlength * \real{\svgscale}}%
    \fi%
  \else%
    \setlength{\unitlength}{\svgwidth}%
  \fi%
  \global\let\svgwidth\undefined%
  \global\let\svgscale\undefined%
  \makeatother%
  \begin{picture}(1,0.70707071)%
    \lineheight{1}%
    \setlength\tabcolsep{0pt}%
    \put(0,0){\includegraphics[width=\unitlength,page=1]{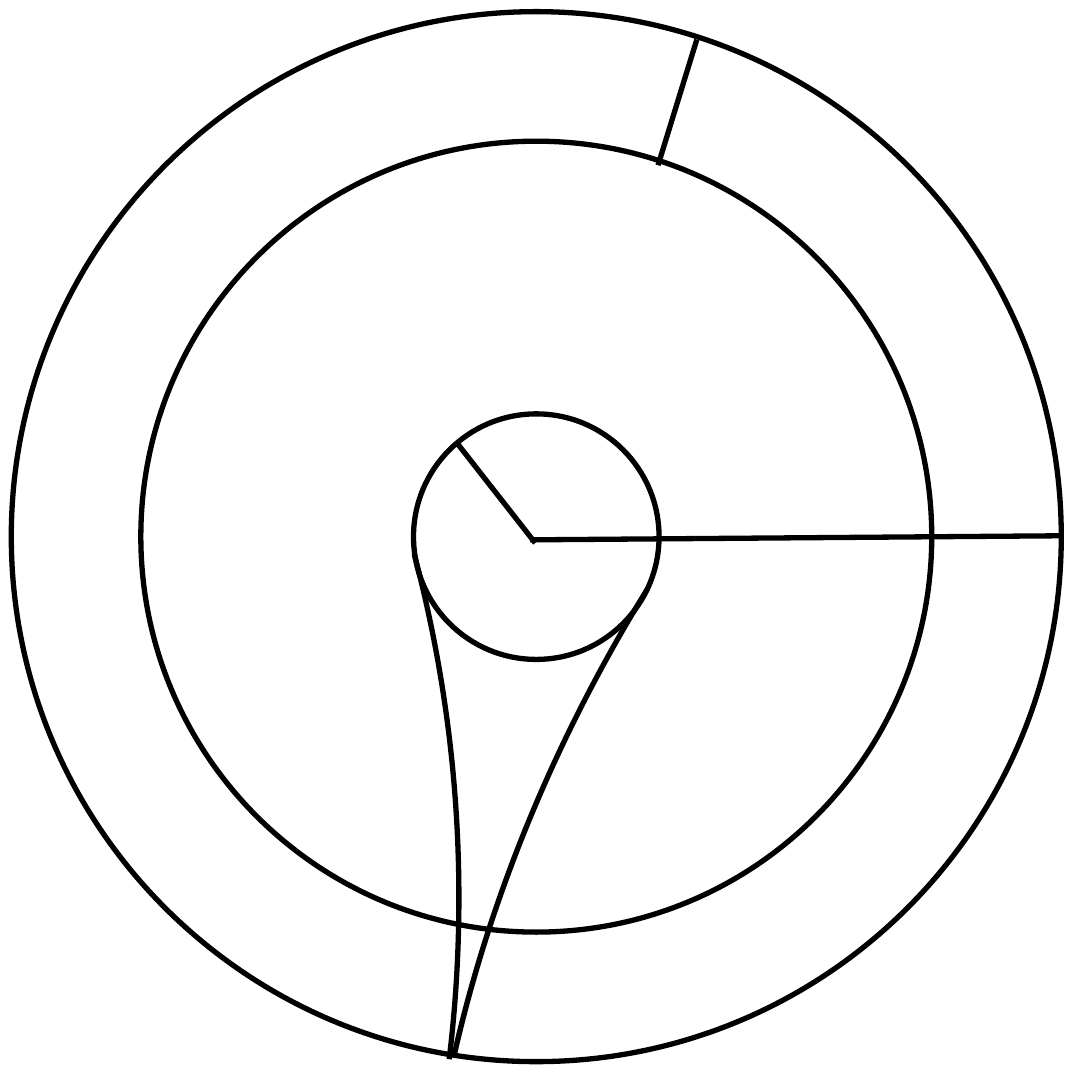}}%
    \put(0.52166316,0.43282729){\color[rgb]{0,0,0}\makebox(0,0)[lt]{\lineheight{1.25}\smash{\begin{tabular}[t]{l}$D_\delta$\end{tabular}}}}%
    \put(0.29169523,0.54141514){\color[rgb]{0,0,0}\makebox(0,0)[lt]{\lineheight{1.25}\smash{\begin{tabular}[t]{l}$\Sigma_{\sigma}$\end{tabular}}}}%
    \put(0.53592854,0.61425992){\color[rgb]{0,0,0}\makebox(0,0)[lt]{\lineheight{1.25}\smash{\begin{tabular}[t]{l}$\sigma$\end{tabular}}}}%
    \put(0.50357194,0.19678068){\color[rgb]{0,0,0}\makebox(0,0)[lt]{\lineheight{1.25}\smash{\begin{tabular}[t]{l}$C_x$\end{tabular}}}}%
    \put(0.41607371,0.04642063){\color[rgb]{0,0,0}\makebox(0,0)[lt]{\lineheight{1.25}\smash{\begin{tabular}[t]{l}$x$\end{tabular}}}}%
  \end{picture}%
\endgroup%

    \caption{A Cross Section of $C_x$}
    \label{fig:circles}
\end{figure}

 Let $V(\sigma)$ be the hyperbolic volume of $C_x - K_\sigma$, and note that this definition is independent of $x$ because all such regions are isometric. Furthermore, $C_x \bigcap \Sigma_\sigma$ is a spherical ball on $\Sigma_\sigma$ centered at $\gamma_x \bigcap \Sigma_\sigma$. Set the radius of this ball in the Euclidean spherical metric on $\Sigma_\sigma$ to be $R(\sigma)$; this is also independent of $x$. Let $x, y \in \sph$. Let $D(x,y,\sigma)$ denote the spherical Euclidean  distance between $\gamma_x \bigcap \Sigma_\sigma$  and $\gamma_y \bigcap \Sigma_\sigma$ on $\Sigma_\sigma$. We will prove the following:

 \begin{tm}
     As $\sigma \rightarrow 0$, the following hold for all $x,y \in \sph$ with constants depending only upon $\delta$:

    \begin{equation}
        \fbox{$V(\sigma) \gtrsim \sigma^2$}
    \end{equation}

    \begin{equation}
        \fbox{$R(\sigma) \asymp \sigma^2$}
    \end{equation}

    \begin{equation}
        \fbox{$D(x,y,\sigma) \asymp |x-y|$}
    \end{equation}

    where $|x-y|$ denotes the spherical distance between $x$ and $y$. 
    
 \end{tm}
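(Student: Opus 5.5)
The plan is to work mostly in the upper half-space model $\mathbb{H}^3=\{(w,z):w\in\comp,\ z>0\}$, chosen so that the cusp $C_x$ has an explicit shape. Only at the end will I convert back to Euclidean quantities on $\Sigma_\sigma$. By the isometry-invariance already noted in the statement, we may fix $x$. Choose an isometry from $\ball$ to the upper half-space sending $0$ to $(0,1)$ and $x$ to $\infty$. Then $\gamma_x$ becomes the vertical ray $\{(0,z):z\ge 1\}$.

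The ball $D_\delta$ becomes a hyperbolic ball about $(0,1)$. The hull of this ball and $\infty$ is the union of all vertical geodesic rays starting in the ball. Set $a=\sinh\delta$. The hyperbolic distance from $(0,1)$ to the vertical plane over the line $\mathrm{Re}\,w=a$ is $\operatorname{arcsinh}(a)=\delta$. Hence above the top of the ball, $C_x$ is exactly the vertical cylinder $\{|w|\le a\}$. I will double-check the claim that the extreme vertical rays are tangent to the ball along its horizontal equator (here I am using that the horizontal $w$-projection of a hyperbolic ball centred at height $1$ is the Euclidean disk of radius $\sinh\delta$).

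Next I relate $\sigma$ to height along $\gamma_x$. A point of $\gamma_x$ at hyperbolic distance $t$ from $0$ has Euclidean radius $1-\sigma=\tanh(t/2)$ in the ball model, so $\sigma\asymp e^{-t}$. In the half-space model this point is $(0,e^t)$. The region $C_x-K_\sigma$ is not literally the part of the cylinder above height $e^t$, because $\Sigma_\sigma$ is a horosphere-like sphere, not a horizontal plane. However, $\Sigma_\sigma$ is the hyperbolic sphere of radius $t$ about $0$. Within the thin cylinder, which is within bounded distance $\delta$ of $\gamma_x$, this sphere lies between heights $e^{t-C}$ and $e^{t+C}$ with $C=C(\delta)$. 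That suffices for all $\asymp$ estimates.

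The volume estimate then follows from
\[
\int_{e^{t+C}}^\infty \frac{\pi a^2}{z^3}\,dz=\frac{\pi a^2}{2}e^{-2(t+C)}\asymp \sigma^2 .
\]
This gives $V(\sigma)\gtrsim\sigma^2$, and in fact $V(\sigma)\asymp\sigma^2$.

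For $R(\sigma)$, I look at the cross-section of the cylinder at height $h\asymp e^t$. Its hyperbolic radius is $\operatorname{arcsinh}(a/h)\asymp e^{-t}$, with constants depending only on $\delta$. The same holds for the cross-section by $\Sigma_\sigma$, up to the bounded distortion above. Back in the ball model, the hyperbolic metric at Euclidean radius $1-\sigma$ is $2|dx|/(1-|x|^2)\asymp |dx|/\sigma$. Hyperbolic lengths of size $\asymp e^{-t}\asymp\sigma$ on $\Sigma_\sigma$ therefore correspond to Euclidean (spherical) lengths $\asymp\sigma\cdot\sigma=\sigma^2$. This gives $R(\sigma)\asymp\sigma^2$.

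The estimate on $D(x,y,\sigma)$ is elementary. The points $\gamma_x\cap\Sigma_\sigma$ and $\gamma_y\cap\Sigma_\sigma$ are the radial projections of $x$ and $y$ onto a sphere of radius $1-\sigma\in[1/2,1]$. Their spherical distance is $(1-\sigma)$ times the angle between $x$ and $y$, hence $\asymp|x-y|$.

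The main obstacle is step two: justifying rigorously the comparison between the cross-section of $C_x$ by the Euclidean sphere $\Sigma_\sigma$ and the horizontal slices of the cylinder. I would handle it by noting that any point of $C_x\cap\Sigma_\sigma$ lies within hyperbolic distance $\delta$ of $\gamma_x$, and then applying the triangle inequality to $d(0,\cdot)$ to pin its height to $e^{t\pm\delta}$.
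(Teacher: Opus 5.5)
Your proposal is correct. For (1) and (3) it follows the paper. The paper also sends $x$ to $\infty$, sees $C_x$ as a vertical tube whose Euclidean radius depends only on $\delta$, integrates $z^{-3}$ above a given height, and computes $D(x,y,\sigma)=(1-\sigma)|x-y|$ directly.

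In (1) the only difference is how you pass from horizontal slabs to $\Sigma_\sigma$. The paper notes that the horoball $\{z\ge e^t\}$ is tangent to $\Sigma_\sigma$ from outside, so it misses $K_\sigma$ (equivalently, $d(0,p)\ge\log h(p)$). This gives the lower bound without your $e^{\pm C}$ loss. Your two-sided sandwich costs a constant, but it also yields $V(\sigma)\asymp\sigma^2$, and you reuse it for (2).

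For (2) the routes genuinely differ. The paper stays in the ball model and takes a planar section through $\gamma_x$. There $\partial C_x$ is a pair of circles orthogonal to $\sph$ and tangent to each other at $x$, and their gap at distance $\sigma$ from $x$ is, up to a $\delta$-constant, $1-\sqrt{1-\sigma^2}\asymp\sigma^2$ (approximating the arc of $\Sigma_\sigma$ by a chord). You instead take the hyperbolic width $\asymp e^{-t}$ of the cylinder and multiply by the conformal factor $\asymp\sigma$ of the ball metric on $\Sigma_\sigma$. Your version keeps all three estimates in a single picture and makes the $\delta$-dependence explicit ($a=\sinh\delta$); the paper's is a one-line Euclidean computation.

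The one step to tighten is the conversion itself. The conformal factor converts lengths along $\Sigma_\sigma$, while $\mathrm{arcsinh}(a/h)$ is a distance to the axis. The right-triangle identity $\sinh d(p,\gamma_x)=\sinh t\,\sin\theta$, for $p\in\Sigma_\sigma$ making angle $\theta$ with $\gamma_x$ at $0$, bridges this exactly. On the wall $|w|=a$ at height $h\asymp e^t$ it gives $\sin\theta=a/(h\sinh t)\asymp e^{-2t}\asymp\sigma^2$, and then $R(\sigma)=(1-\sigma)\theta$.
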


 \bigskip

\begin{proof}
    We begin by proving $(1)$. We may assume that $x$ lies on the unit circle in $\hat{\comp}$ and is equal to $1$ there. Map the hyperbolic ball model $\ball$ to the upper half space model $\mathbb{H}^3$ by an isometry $\phi$ taking $0$ to $i$ and $1$ to $\infty$. Let the image of $C_x$ be called $\Gamma_x$. Since $x \mapsto \infty$ under this map $\Gamma_x$ is a Euclidean vertical cylindrical tube issuing from the sides of a small ball (exactly how small depending only on $\delta$) around $i$ tending upward toward $\infty$. Thus, up to a constant depending only on $\delta$, the tube has Euclidean radius $1$.

 \begin{figure}[ht]
    \centering
    \def\svgwidth{.6\textwidth}
\begingroup%
  \makeatletter%
  \providecommand\color[2][]{%
    \errmessage{(Inkscape) Color is used for the text in Inkscape, but the package 'color.sty' is not loaded}%
    \renewcommand\color[2][]{}%
  }%
  \providecommand\transparent[1]{%
    \errmessage{(Inkscape) Transparency is used (non-zero) for the text in Inkscape, but the package 'transparent.sty' is not loaded}%
    \renewcommand\transparent[1]{}%
  }%
  \providecommand\rotatebox[2]{#2}%
  \newcommand*\fsize{\dimexpr\f@size pt\relax}%
  \newcommand*\lineheight[1]{\fontsize{\fsize}{#1\fsize}\selectfont}%
  \ifx\svgwidth\undefined%
    \setlength{\unitlength}{841.88976378bp}%
    \ifx\svgscale\undefined%
      \relax%
    \else%
      \setlength{\unitlength}{\unitlength * \real{\svgscale}}%
    \fi%
  \else%
    \setlength{\unitlength}{\svgwidth}%
  \fi%
  \global\let\svgwidth\undefined%
  \global\let\svgscale\undefined%
  \makeatother%
  \begin{picture}(1,0.70707071)%
    \lineheight{1}%
    \setlength\tabcolsep{0pt}%
    \put(0,0){\includegraphics[width=\unitlength,page=1]{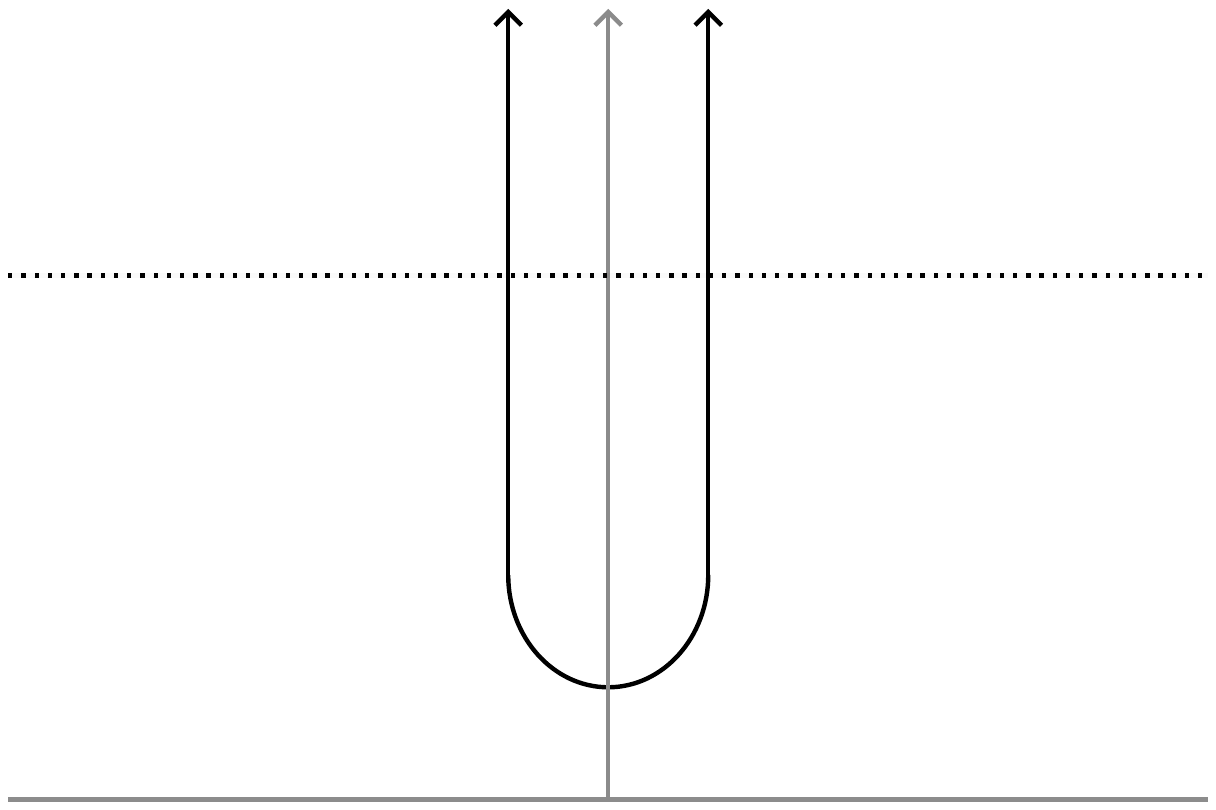}}%
    \put(0.72483196,0.40223243){\color[rgb]{0,0,0}\makebox(0,0)[lt]{\lineheight{1.25}\smash{\begin{tabular}[t]{l}$y = t$\end{tabular}}}}%
    \put(0.56581062,0.27461559){\color[rgb]{0,0,0}\makebox(0,0)[lt]{\lineheight{1.25}\smash{\begin{tabular}[t]{l}$\Gamma_x$\end{tabular}}}}%
    \put(0.15884088,0.49192174){\color[rgb]{0,0,0}\makebox(0,0)[lt]{\lineheight{1.25}\smash{\begin{tabular}[t]{l}$\mathbb{H}$\end{tabular}}}}%
    \put(0.41781839,0.55315996){\color[rgb]{0,0,0}\makebox(0,0)[lt]{\lineheight{1.25}\smash{\begin{tabular}[t]{l}$\phi(x) = \infty$\end{tabular}}}}%
  \end{picture}%
\endgroup%

    \caption{A cross section of the image of $C_x$ under $\phi$}
    \label{fig:circles}
\end{figure}

    \bigskip

 Denote the coordinate along the vertical axis in $\Bbb H^3$ by $y$. Let $t > 1$, and consider the horosphere $y = t$. We estimate the volume of $\Gamma_x \bigcap \{(x_1, x_2, y) \in \Bbb H^3 : y \geq t\}$. Since up to a constant the tube $\Gamma_x$ has width $1$, we have that a horospherical cross section $\Gamma_x \bigcap \{(x_0, x_1, y_0) \in \Bbb H^3 : y = y_0\}$, $y_0 > t$ has hyperbolic area $\frac{1}{y_0^2}$ up to a constant dependent only on $\delta$. The volume $v(t)$ of $\Gamma_x \bigcap \{(x_1, x_2, y) \in \Bbb H^3 : y \geq t\}$ is thus, with $C_\delta$ a constant depending only on $\delta$, equal to

 $$v(t) = C_\delta\int_{y=t}^\infty \frac{1}{y^2} (\frac{dy}{y}) = \frac{C_\delta}{t^2}$$

 Under the map back to $\ball$, $\Gamma_x \bigcap \{(x_1, x_2, y) \in \Bbb H^3 : y \geq t\}$ maps to $C_x \bigcap H_\sigma$ where $H_\sigma$ is the horosphere at $x=1$ tangent to $\Sigma_\sigma$, where

 $$1 - \sigma = \frac{it - i}{it + i}$$

 and so 

 $$\sigma = \frac{2}{t + 1} \quad \quad  \quad t = \frac{2-\sigma}{\sigma}$$

 Since the horosphere $H_\sigma$ lies tangent to $\Sigma_\sigma$, we have that 

 $$V(\sigma) \gtrsim v(t) = v(\frac{2-\sigma}{\sigma}) \asymp (\frac{\sigma}{2 - \sigma})^2 \rightarrow \sigma^2$$

 as $\sigma \rightarrow 0$. This completes the proof of $(1)$. 

 \bigskip

    To prove $(2)$, we recall that hyperbolic geodesics in the Poincar\'{e} model are arcs of circles orthogonal to the boundary at $x$. Assume once again that $x = 1$. Note that a hyperbolic planar cross section of $C_x$ is the region between two such circles (whose radii depend only on $\delta$) lying tangent precisely at $x = 1$. Because $\Sigma_\sigma \bigcap C_x$ is rotationally symmetric about $\gamma_x$, it suffices to prove $(2)$ for a hyperbolic planar cross section.  Within such a cross section note that the circular arc giving the intersection of $\Sigma_\sigma$ and $C_x$ is better and better approximated by segment of the line orthogonal to $\gamma_x$ between the geodesic arcs as $\sigma \rightarrow 0$. 

    \bigskip

    The Euclidean length of the segment orthogonal to $\gamma_x$ lying between the two spheres kissing at $x=1$ is $1 - \sqrt{1 - \sigma^2}$, up to a constant depending only on $\delta$.  Therefore, as $\sigma \rightarrow 0$:

    $$R(\sigma) \asymp 1 - \sqrt{1 - \sigma^2} \asymp \sigma^2$$ where we have taken the leading order term in the Taylor expansion of $\sqrt{1 - \sigma^2}$. 

    \bigskip

    Let $x,y \in \sph$. The spherical distance $D(x,y,\sigma)$ between $\gamma_x \bigcap \Sigma_\sigma$ and $\gamma_y \bigcap \Sigma_\sigma$ on $\Sigma_\sigma$ is $(1 - \sigma)|x - y|$, As $\sigma \rightarrow 0$, $D(x,y,\sigma) \rightarrow |x - y|$. This concludes the proof.     
\end{proof}

    \section{Convex Hull Volumes}

   We begin by recalling some facts about hyperbolic convex hulls. Let $E \subset \hat{\comp}$ be closed. 

    \begin{defi}
         The hyperbolic convex hull of $E$, $CH(E) \subset \ball$, is the hyperbolic convex hull of the union of all hyperbolic geodesics in $\ball$ with endpoints on $E$.
         \end{defi}

 We begin by characterizing precisely which $E$ the hyperbolic convex hull has nonempty interior. The following argument is elementary, and can be found in standard references on this topic such as Epstein and Marden \cite{epst}: 
    
    \begin{tm}
        For $E \subset \hat{\comp}$ closed, $CH(E)$ has empty interior if and only if $E$ is a subset of a circle. 
    \end{tm}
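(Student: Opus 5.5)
The plan is to prove the two directions separately. I will work in the ball model $\ball$ with sphere at infinity $\sph$, and treat circles in $\hat{\comp}$ as boundaries at infinity of totally geodesic hyperbolic planes.

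\emph{If $E$ lies on a circle, then $CH(E)$ has empty interior.} Let $E \subset \mathcal{C}$ for a circle $\mathcal{C}$, and let $P$ be the hyperbolic plane in $\ball$ with $\partial_\infty P = \mathcal{C}$. Every geodesic with both endpoints in $E \subset \mathcal{C}$ lies in $P$, because the geodesic joining two points of $\mathcal{C}$ is the arc of a circle orthogonal to $\sph$ lying in the plane (or sphere) containing $\mathcal{C}$. Since $P$ is totally geodesic, it is hyperbolically convex. Hence the hull of the union of these geodesics is contained in $P$, which is a 2-dimensional surface with empty interior in $\ball$. (If $E$ has fewer than two points the hull is empty or a point, and the statement is trivial.)

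\emph{If $E$ is not contained in any circle, then $CH(E)$ has nonempty interior.} I will find four points of $E$ that do not lie on a common circle. Any three distinct points determine a unique circle. So if $E$ has at least three points, pick three of them, $a,b,c$, and let $\mathcal{C}$ be their circle. Since $E \not\subset \mathcal{C}$, there is a fourth point $d \in E \setminus \mathcal{C}$. (If $E$ has at most three points it lies on a circle, so this case does not arise.) The geodesic tetrahedron with ideal vertices $a,b,c,d$ lies in $CH(E)$: its edges are geodesics with endpoints in $E$, and $CH(E)$ is convex, so it contains the geodesic triangles spanned by those edges and then the solid tetrahedron they bound.

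The main point is to show this ideal tetrahedron is nondegenerate, i.e.\ has interior. I would use a Möbius normalization. Apply an isometry of $\ball$, equivalently move to the upper half space model, sending $d \mapsto \infty$. Then $a,b,c$ go to three points of $\comp$, and these are not collinear: a line through $a,b,c$ together with $\infty$ would be a circle in $\hat{\comp}$ through all four points, which is impossible. The ideal tetrahedron now has three vertical faces over the sides of the Euclidean triangle $abc$, and its fourth face lies on the hemisphere through $a,b,c$. It therefore contains the region over the open triangle $abc$ lying above that hemisphere, which is an open set. So $CH(E)$ has nonempty interior.

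The only step requiring care is the degeneracy check, namely that four points on no common circle give a solid ideal simplex. The normalization sending one vertex to $\infty$ turns this into the elementary fact that three non-collinear points in the plane span a nondegenerate triangle.
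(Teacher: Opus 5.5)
Your proof is correct. The first direction matches the paper's: everything lies in the totally geodesic plane over the circle, which is convex and has no interior.

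For the converse you take a genuinely different route.

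\textbf{The paper's route.} It argues with points inside $\ball$. If $CH(E)$ lay in a totally geodesic plane, every point of $E$ would be a landing point of a geodesic in that plane, hence on its boundary circle. So $CH(E)$ is not contained in any plane. The paper then chooses three non-collinear points of $CH(E)$ and a fourth point off their plane, and uses the compact tetrahedron they span.

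\textbf{Your route.} You pick four points $a,b,c,d$ of $E$ itself lying on no common circle. You then check directly, by sending $d$ to $\infty$, that the ideal tetrahedron they span is nondegenerate. The check works because $a,b,c$ become non-collinear, so the region above the circumscribing hemisphere over the open triangle is a nonempty open set inside the tetrahedron.

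\textbf{Comparison.} The paper's argument is a soft convexity argument that never has to handle ideal vertices. Yours needs the extra step that the ideal tetrahedron sits inside $CH(E)$. You handle this correctly: interior points of ideal triangles lie on segments joining points of two edges, and interior points of the solid lie on segments joining points of two faces, so no closure property of $CH(E)$ is needed. In exchange, your argument produces an explicit positive-volume ideal tetrahedron with vertices in $E$. This is precisely the content of Lemma 5.3, which the paper deduces from this theorem. Your argument proves that lemma directly, so the logical dependence between the two results could be reversed.
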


   \begin{proof}
       Suppose $E$ is a subset of a circle $C$. Then by definition $CH(E) \subset CH(C)$. However, $CH(C)$ is a dome on a sphere intersecting $\ball$ orthogonally and thus has no interior. Therefore, $CH(E)$ has no interior. 
   \bigskip

   The opposite direction follows from the fact that $CH(E)$ is a closed convex subset of $\ball$ that is not contained in any totally geodesic hyperbolic plane. For if $CH(E)$ were contained in a totally geodesic hyperbolic plane, $E$ would consist of only landing points for geodesics within this plane and those landing points would belong to the circle given by the intersection of the totally geodesic hyperbolic plane with the boundary. Thus, we may choose three non-colinear points in $CH(E)$ and form the unique totally geodesic hyperbolic plane containing the triangle they form. By the above, we may pick a fourth point in $CH(E)$ not lying on this plane. The hull of this fourth point and the triangular patch on the plane forms a hyperbolic tetrahedron contained within $CH(E)$ by convexity, and so $CH(E)$ has non-empty interior.        
   \end{proof} 

   \bigskip

    We now proceed to give a sufficient condition on a closed $E \subset \hat{\comp}$ so that $CH(E)$ has infinite hyperbolic volume. First, let us recall the notion of packing number. 

    \begin{defi}
        Let $E \subset \hat{\comp}$, and let $\epsilon > 0$. The packing number $P_\epsilon(E)$ is the maximum number of disjoint balls of radius $\epsilon$ with centers in $E$. 
    \end{defi}

    A sufficient condition on $E$ for $CH(E)$ to have infinite hyperbolic volume is the following:

    \begin{tm}
        Let $E$ be a closed subset of $\hat{\comp}$ that is not a subset of a circle and suppose there exists $\lambda \in (0,1)$ such that 

        \begin{equation}
            \sum_{j=1}^\infty \lambda^j P_{\lambda^j}(E) = \infty 
        \end{equation} Then the hyperbolic volume of $CH(E)$ is infinite. 
    \end{tm}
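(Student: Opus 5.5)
Since $E$ is not contained in a circle, Theorem 3.2 shows that $CH(E)$ has nonempty interior. So $CH(E)$ contains a small hyperbolic ball. After a M\"obius change of coordinates (an isometry of $\ball$, which preserves the statement up to bounded distortion of spherical distances on $\sph$, hence up to constants in the packing numbers at comparable scales), we may assume this ball is $D_\delta$ centered at $0$ for some fixed $\delta>0$. By convexity, for every $x\in E$ the cusp $C_x$ (the hull of $D_\delta$ and $x$) lies in $CH(E)$. Hence
$$CH(E)\supset \bigcup_{x\in E} C_x ,$$
and it suffices to show that this union has infinite volume.

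I would decompose $\ball$ into the Euclidean shells $A_j = K_{\sigma_{j+1}}\setminus K_{\sigma_j}$, where $\sigma_j$ is a geometric sequence adapted to $\lambda$. Theorem 2.1 suggests choosing $\sigma_j$ with $R(\sigma_j)\asymp\lambda^j$, i.e.\ $\sigma_j\asymp\lambda^{j/2}$. At depth $\sigma_j$ the cross-section $C_x\cap\Sigma_{\sigma_j}$ is a spherical cap of radius $\asymp\lambda^j$, and by (3) the centers $\gamma_x\cap\Sigma_{\sigma_j}$ of distinct cusps are separated by $\asymp|x-y|$. Take a maximal family of $P_{\lambda^j}(E)$ points $x_1,\dots,x_N$ of $E$ whose $\lambda^j$-balls are disjoint. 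After discarding a bounded fraction of them (a rescaling of the packing radius by a constant factor, which I would absorb into constants), the caps $C_{x_i}\cap\Sigma_\sigma$ are pairwise disjoint for $\sigma$ in a range comparable to $\sigma_j$. The pieces $C_{x_i}\cap A_j$ are then essentially disjoint.

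Each such piece has hyperbolic volume bounded below. The hyperbolic volume element near the boundary is $\asymp\sigma^{-3}\,d(\text{Euclidean volume})$. A piece has Euclidean cross-sectional area $\asymp R(\sigma)^2\asymp\sigma^4$ and radial thickness $\asymp\sigma_j$, which gives hyperbolic volume $\asymp\sigma_j^{-3}\sigma_j^4\sigma_j=\sigma_j^2\asymp\lambda^j$. This is consistent with (1), since $V(\sigma)-V(\sigma')$ is of this order. Summing over shells then gives
$$\mathrm{Vol}(CH(E))\gtrsim\sum_j \lambda^j P_{c\lambda^j}(E).$$
To finish, I would use the comparison $P_{c\epsilon}(E)\gtrsim P_\epsilon(E)$, which holds because each $\epsilon$-ball contains boundedly many disjoint $c\epsilon$-balls, or alternatively re-index with $\lambda^{j}$ replaced by a power of $\lambda$. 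Hypothesis (4) then yields divergence.

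The main obstacle is the disjointness and lower-volume step. Theorem 2.1 gives only asymptotic comparisons with unspecified constants, so I need the cap radius to be uniformly smaller than half the separation of the packing points throughout a whole shell $A_j$, not only on one sphere. This requires monotonicity of $R(\sigma)$ together with the two-sided bound $R\asymp\sigma^2$ over a range $\sigma\in[\sigma_{j+1},\sigma_j]$ of fixed ratio. I would handle it by computing the cusp cross-sections explicitly in the upper half-space picture used to prove (1). Alternatively, I would shrink the packing radius by a fixed constant factor $c$ and compensate with the packing-number comparison above; I would also check that this comparison does not lose the divergence of the series.
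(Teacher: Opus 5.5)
Your proposal is correct and follows essentially the same route as the paper. Both arguments place cusps $C_x\subset CH(E)$ over a maximal $\lambda^j$-packing, cut them to the shell between $\Sigma_{\sigma_j}$ and $\Sigma_{\sigma_{j+1}}$ with $\sigma_j\asymp\lambda^{j/2}$ so each piece has volume $\asymp\lambda^j$, and sum over $j$. Your constant bookkeeping goes further than the paper: you take $\sigma_j=c_0\lambda^{j/2}$ (or thin the packing and use the doubling comparison of packing numbers), and you bound each shell's volume via the density $\asymp\sigma^{-3}$ and the two-sided estimate $R(\sigma)\asymp\sigma^2$. This makes rigorous the disjointness and per-shell volume steps that the paper only asserts up to $\asymp$.
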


    \begin{proof} Let $E \subset \hat{\comp}$ be closed and not a subset of a circle. By Theorem 3.2, $CH(E)$ has non-empty interior. Furthermore, suppose that there is $\lambda \in (0,1)$ so that 
    
$$\sum_{j=1}^\infty \lambda^j P_{\lambda^j}(E) = \infty $$

    We assume $E$ is a compact subset of $\comp$. The proof admits an obvious modification in the case that $E$ is unbounded. Since $E$ is compact, let $a_{n,i}$, $1 \leq i \leq P_{\lambda^n}(E)$ label the centers of balls in a maximal packing of $E$ by balls with radii $\lambda^n$. We may assume that $0 \in \ $$
    \rm{int}$$(CH(E))$. This is because we can pick a geodesic intersecting $0$, intersecting the interior of $CH(E)$ in an open segment, and  landing at infinity. We may consider the orbit of this open segment under application of the parabolic isometry (and its inverse) given by translating along this geodesic. These open intervals cover the entire geodesic, and so in particular we may translate an interior point of $CH(E)$ to $0$ by a hyperbolic isometry fixing infinity.  Condition (4) is maintained for a potentially different $\lambda \in (0,1)$ because the hyperbolic isometries fixing infinity act on $\comp$ as affine transformations.  Consider the hyperbolic ball $B(0, \delta)$. Let $C_{n,i} = C_{a_{n,i}}$ be the convex hull of $B(0, \delta)$ and $a_{n,i}$, and let $\gamma_{n,i}$ be the unique geodesic ray joining $0$ to $a_{n,i}$.

    \bigskip

    Fix $n \geq 1$ and let $\sigma_n = \sqrt{\lambda^n}$. We consider $T_{n,i} := C_{n,i} - K_{\sigma_n}$ (the part of the tube $C_{n,i}$ lying beyond $\Sigma_{\sigma_n}$). For $n > m$, let  $T_{m,n,i} := C_{m,i} - C_{n,i}$ (the part of the tube $C_{m,i}$ lying between $\Sigma_{\sigma_m}$ and $\Sigma_{\sigma_n}$). We now aim to construct subsets of $CH(E)$ whose hyperbolic volume is bounded below (up to some constant) by 

    $$ \sum_{j=1}^n \lambda^j P_{\lambda^j}(E) \rightarrow  \infty$$ 
    
    We apply Lemma 2.1. Note that as $n \rightarrow \infty$, $\sigma_n \rightarrow 0$ because $\lambda < 1$. Therefore, the hyperbolic volume of $T_{n,i}$ is independent of $i$ and equal to $V(\sigma_n) \gtrsim \lambda^n$ as $n \rightarrow \infty$. Moreover, there are $P_{\lambda^n}(E)$ such truncated tubes, and they are all disjoint because $R(\sigma_n) \asymp \sigma_n^2 = \lambda^n$ and $D(a_{n,i}, a_{n,j}, \sigma_n) \asymp |a_{n,i} - a_{n,j}| > \lambda^n$ since $a_{n,j}$ are centers of a $\lambda^n$-packing of $E$. Now, consider also the tubes $T_{n-1,n, i}$ obtained from an optimal $\lambda^{n-1}$ packing of $E$. Suppose $n \geq N$ for $N$ some very large natural number. The volume of $T_{n-1, n,i}$ is some fixed constant (depending only on $N$) times $\sigma_{n-1}^2$ as $n \rightarrow \infty$. This is because the error in volume (given by adding the rest of the cusp beyond $\Sigma_{\sigma_n}$) shrinks to zero as $n \rightarrow \infty$. In general, for the tubes above the centers of an optimal $\lambda^{n-j}$ packing the volume of $T_{n-j, n-j+1, i}$ are also the same fixed constant times $\sigma_{n-j}^2$. Moreover, the sets $T_{n,i}$, $T_{n-j, n-j+1,i}, 1 \leq j \leq n$ are pairwise disjoint. For fixed $j$, $i$ variable $T_{n-j, n-j+1,i}$ are disjoint because of lemma $2.1$ and the fact that these tubes land at the centers of a $\lambda^{n-j}$-packing of $E$. $T_{n - j, n-j+1,i}$ and $T_{n-k, n-k+1}$, $i \neq k$ are disjoint because they lie between different $\Sigma_{\sigma_k}$ surfaces. 

    \bigskip
    
    Recall that $\sigma_j^2 = \lambda^j$. Note also that the hyperbolic volume of $K_{\sigma_N}$ is finite. Thus the hyperbolic volume of the $P_{\lambda^n}(E)$ tubes $T_{n,i}$ each of volume controlled below by  $\lambda^n$ plus the hyperbolic volume of the  $T_{n-1, n, i}$  each of volume controlled below by  $\lambda^{n-1}$ plus the volumes of the $T_{n-2, n-1, i}$ tubes etc. is at least some fixed constant depending only on $N$ and $\delta$ times

     $$\sum_{j = 1}^n \lambda^j P_{\lambda^j}(E)$$

    Therefore, by the assumption $(4)$, $CH(E)$ has subsets whose hyperbolic volumes are arbitrarily large, and the proof is concluded.
    \end{proof}

    \bigskip

    We now recall some standard notions of dimension \cite{mat}:

    \begin{defi}
       Let $K \subset \comp$ and let $\epsilon > 0$. Define the $\emph{upper and lower Minkowski Dimension}$ of $K$ to be 

    $$\overline{\rm{dim}}_M(K) = \limsup_{\epsilon \rightarrow 0} \frac{\log(P_\epsilon(K))}{\log(\frac{1}{\epsilon})}$$

    $$\underline{\rm{dim}}_M(K) = \liminf_{\epsilon \rightarrow 0} \frac{\log(P_\epsilon(K))}{\log(\frac{1}{\epsilon})}$$ respectively.  The $\emph{Hausdorff Dimension}$ of $K$ is 

    $$\mathrm{dim}_\mathcal{H} (K) = \mathrm{inf}\{s : \mathcal{H}^s(K) = 0\}$$ where the $s$-dimensional $\emph{Hausdorff Measure}$ is the $$\lim_{\delta \rightarrow \infty} \sup \{K \subset \bigcup_i E_i : \mathrm{diam}(E_i) \leq \delta \}$$ the supremum being taken over countable coverings of $K$ by sets of diameter less than or equal to $\delta$.  
       
    \end{defi}

    The Hausdorff measure of a set controls from above and below the asymptotic values of the packing numbers $P_\epsilon(K)$ as $\epsilon \rightarrow 0$. This immediately yields two corollaries. 

 \begin{cor}
     Let $E$  be a closed subset of $\comp$ that does not belong to a circle, and suppose $\mathcal{H}^1(E) > 0$. Then $CH(E)$ has infinite hyperbolic volume. 
 \end{cor}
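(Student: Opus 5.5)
The plan is to verify the hypothesis of Theorem 3.4 directly. That theorem needs some $\lambda\in(0,1)$ with $\sum_j \lambda^j P_{\lambda^j}(E)=\infty$. Since $E$ is closed, not contained in a circle, and $\mathcal{H}^1(E)>0$, it suffices to show that $\epsilon P_\epsilon(E)$ is bounded below by a positive constant for all small $\epsilon$. Then every term $\lambda^j P_{\lambda^j}(E)$ of the series is bounded below by the same positive constant, so the series diverges for any $\lambda\in(0,1)$, say $\lambda=1/2$. Theorem 3.4 then gives that $CH(E)$ has infinite volume.

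The key step is the comparison between packing numbers and Hausdorff content. I would first reduce to the case that $E$ is bounded. If $\mathcal{H}^1(E)>0$, then $\mathcal{H}^1(E\cap \overline{B(0,R)})>0$ for some $R$, by countable subadditivity. That set is compact and still has positive $\mathcal{H}^1$ measure. Its packing numbers are at most those of $E$, so the divergent series for it implies divergence for $E$. I keep the non-circle hypothesis on $E$ itself, since that is all Theorem 3.4 requires. So assume $E$ is compact.

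Now take a maximal $\epsilon$-packing of $E$, with centers $a_1,\dots,a_P$ and $P=P_\epsilon(E)$. By maximality, every point of $E$ lies within distance $2\epsilon$ of some $a_i$; otherwise a further disjoint ball could be added. Hence the balls $B(a_i,2\epsilon)$ cover $E$, and each has diameter $4\epsilon$. By the definition of the $\delta$-approximating Hausdorff measure at scale $\delta=4\epsilon$,
$$\mathcal{H}^1_{4\epsilon}(E)\le \sum_{i=1}^{P}4\epsilon = 4\epsilon P_\epsilon(E).$$
The quantities $\mathcal{H}^1_{4\epsilon}(E)$ increase as $\epsilon\to0$, with limit $\mathcal{H}^1(E)>0$. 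So for all $\epsilon<\epsilon_0$ we get $\epsilon P_\epsilon(E)\ge \mathcal{H}^1(E)/8$, or $\ge 1/8$ if the measure is infinite. This gives the uniform lower bound, and summing over $\epsilon=\lambda^j$ with $j$ large yields divergence.

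The argument is routine, but I expect two points to need care. The first is the bookkeeping about the metric: the packing numbers in Theorem 3.4 use spherical balls, while $\mathcal{H}^1$ here is Euclidean. On a compact subset of $\comp$ the two metrics are bi-Lipschitz equivalent, so the packing numbers change only by rescaling $\epsilon$ by a bounded factor and the lower bound survives. The second is reading the paper's garbled definition of Hausdorff measure correctly, as $\lim_{\delta\to0}\inf\sum \mathrm{diam}(E_i)$ over coverings by sets of diameter at most $\delta$. With that reading, only the monotone-limit property above is used.
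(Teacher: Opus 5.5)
Your proposal is correct and follows the paper's route: bound the packing numbers below using $\mathcal{H}^1(E)>0$, so that the series in Theorem 3.4 diverges, and then invoke that theorem. Your maximal-packing covering argument, which gives $\epsilon P_\epsilon(E)\ge \mathcal{H}^1_{4\epsilon}(E)/4$, supplies the justification the paper omits. It also uses the right exponent $1$, whereas the paper asserts $P_{\lambda^j}(E)\gtrsim \lambda^{-j\dim_{\mathcal{H}}(E)}$, a bound that can fail without positive measure in that dimension (compare the paper's own Remark 3.8) and is not needed here.
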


 \begin{proof} As $j\rightarrow \infty$, we have for such an $E$ that $P_{\lambda^j}(E) \gtrsim (\frac{1}{\lambda^j})^{\dim_{\mathcal{H}}(E)}$. Theorem 3.4 may be applied.
 \end{proof}

 \begin{cor}
     Let $E$ be a continuum in the plane that does not belong to a circle. Then $CH(E)$ has infinite hyperbolic volume.
 \end{cor}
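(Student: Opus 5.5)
The plan is to verify the hypothesis of Theorem 3.4 directly, or to reduce to Corollary 3.6. For a continuum the relevant input is a lower bound on packing numbers: $P_\epsilon(E) \gtrsim 1/\epsilon$ for small $\epsilon$. With that bound, taking any $\lambda \in (0,1)$ gives $\lambda^j P_{\lambda^j}(E) \gtrsim 1$ for all large $j$. The series (4) therefore diverges, and Theorem 3.4 applies, since $E$ is closed and by assumption not contained in a circle.

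First I dispose of the degenerate case. If $E$ is a single point it lies on a circle, so we may assume $E$ has at least two points. Then $\operatorname{diam}(E) = d > 0$. Fix $p, q \in E$ with $|p-q| = d$.

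The key step is the packing bound. Consider the function $f(z) = |z - p|$ on $E$. It is continuous, and $E$ is connected, so $f(E)$ is a connected subset of $[0,d]$ containing $0$ and $d$; hence $f(E) = [0,d]$. For $\epsilon < d$ and each $k = 0,1,\dots,\lfloor d/(3\epsilon)\rfloor$, choose $z_k \in E$ with $|z_k - p| = 3k\epsilon$. The reverse triangle inequality gives $|z_k - z_l| \ge 3|k-l|\epsilon > 2\epsilon$ for $k \neq l$. So the balls of radius $\epsilon$ about the $z_k$ are pairwise disjoint, and $P_\epsilon(E) \ge d/(3\epsilon)$.

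(If one prefers the spherical metric used in Section 2, the same argument works locally. Alternatively, one can first move $E$ into a bounded part of $\comp$ by a M\"obius map, which is bi-Lipschitz on compacta and so changes packing numbers only by constants. This is the "obvious modification" already invoked in the proof of Theorem 3.4.)

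Substituting $\epsilon = \lambda^j$ gives $\lambda^j P_{\lambda^j}(E) \ge d/3$ for all $j$ with $\lambda^j < d$. Hence $\sum_j \lambda^j P_{\lambda^j}(E) = \infty$, and Theorem 3.4 yields that $CH(E)$ has infinite volume.

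As an alternative route, the same projection argument shows $\mathcal{H}^1(E) \ge \mathcal{H}^1(f(E)) = d > 0$, because $f$ is $1$-Lipschitz. Corollary 3.6 then applies directly.

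I expect no serious obstacle. The only delicate point is making sure the packing estimate is in the metric Theorem 3.4 actually uses, Euclidean versus spherical. Handling unbounded continua through the M\"obius normalization above, with its bi-Lipschitz distortion constants, addresses this.
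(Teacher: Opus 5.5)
Your proof is correct. Your ``alternative route'' is the paper's proof: the paper cites the standard fact $\mathcal{H}^1(K)\ge \mathrm{diam}(K)$ for a continuum and then applies Corollary 3.6. Your 1-Lipschitz map $z\mapsto |z-p|$ onto $[0,d]$ is the usual proof of that fact.

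Your main route is different: it bypasses Hausdorff measure and Corollary 3.6 entirely. You use connectedness directly to get $\lfloor d/(3\epsilon)\rfloor+1$ points of $E$ at mutual distance at least $3\epsilon$. Hence $P_\epsilon(E)\ge d/(3\epsilon)$, which is exactly the input Theorem 3.4 needs: every term $\lambda^j P_{\lambda^j}(E)$ is at least $d/3$, for every $\lambda\in(0,1)$. What this buys is self-containedness. The paper's route depends on passing from $\mathcal{H}^1(E)>0$ to a lower bound on packing numbers, which Corollary 3.6 asserts without proof. It also states that bound with exponent $\dim_{\mathcal{H}}(E)$, where only exponent $1$ is needed. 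You instead prove the required estimate outright, with an explicit constant.

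Your caveat about the metric is more cautious than necessary. The packing argument uses only the continuity of $z\mapsto\rho(z,p)$ and the triangle inequality for a metric $\rho$. So it works verbatim, and globally, in the chordal metric on $\hat{\comp}$. Also, a continuum in the plane is compact and hence bounded, so the M\"obius normalization is never needed.
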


\begin{proof} The $\mathcal{H}^1$-measure of a continuum $K$ in the plane is greater than or equal to the diameter of $K$. Since $K$ does not belong to a circle, $K$ may not be a point. Therefore, $\mathcal{H}^1(K) \geq \rm{diam}$$(K) > 0$, and Corollary 3.6 applies. 
\end{proof}

\begin{rmk}
    It is tempting to believe that $\dim_\mathcal{H}(E) \geq 1$ implies that $P_{\lambda^j}(E) \gtrsim (\frac{1}{\lambda^j})^{\dim_\mathcal{H}(E)}$, and thus that Corollary $3.6$ might be true replacing Hausdorff measure with Hausdorff dimension (a weaker condition). This is false. One can construct examples where $P_{\lambda^j}(E) = \frac{\lambda^{-j}}{j^2}$. For instance, consider performing a Cantor-like construction that removes $\frac{4^j}{j^2}$ of the $4^j$ intervals subdividing the unit interval into $4^j$ intervals at the $j$'th stage. The Hausdorff dimension will still be $1$, but the series (4) will converge.
\end{rmk}

\section{Necessity and Cantor Bridge sets}

One may inquire as to whether the sufficient condition in Theorem 3.4 is also necessary in order that the hyperbolic volume of $CH(E)$ for a closed subset of $\hat{\comp}$ is infinite. This is not the case. In what follows, we construct a family of sufficiently thin Cantor sets for which the sum in Theorem 3.4 converges for all $\lambda \in (0,1)$ but whose hyperbolic convex hull volumes are infinite. 

\bigskip

Let $C_\beta$, $0 < \beta < 1$ be the standard 4-corners Cantor set in the plane, given by the product of two Cantor sets in the interval given by removing the central interval of length $\beta$. We call $CH(C_\beta) \subset \mathbb{H}^3$ a $\emph{Cantor Bridge Set}$, as the geodesics with both landing points on the totally disconnected set $C_\beta$ may be seen as $``$bridges." We begin by showing that all such sets have infinite volume.

\begin{lm}
    For $\beta \in (0,1)$, the Cantor Bridge Set $CH(C_\beta)$ has infinite hyperbolic volume.
\end{lm}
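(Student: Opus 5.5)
The plan is to use the self-similarity of $C_\beta$ together with the fact that hyperbolic isometries fixing $\infty$ act on $\comp$ as similarities. Write $C_\beta = \bigcup_{k=1}^4 f_k(C_\beta)$, where each $f_k(z) = rz + b_k$ with $r = (1-\beta)/2$. Each $f_k$ extends to a hyperbolic isometry $F_k$ of upper half space: a dilation by $r$ composed with a horizontal translation. Hence
$$F_w(CH(C_\beta)) = CH(f_w(C_\beta)) \subset CH(C_\beta)$$
for every finite word $w$. Since $C_\beta$ is not contained in a circle (it contains four corners and more), Theorem 3.2 gives a point $p$ in the interior of $CH(C_\beta)$ and a hyperbolic ball $B = B(p,\delta) \subset CH(C_\beta)$, of some fixed volume $v_0 > 0$.

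Next, place disjoint isometric copies of $B$ inside $CH(C_\beta)$. Each image $F_w(B)$ lies in $CH(f_w(C_\beta))$ and has volume $v_0$, so it is enough to find infinitely many words $w$ with the balls $F_w(B)$ pairwise disjoint. I will take one representative word from each generation, say $w_n = 1^n$. Then $F_{w_n}(B)$ is centered over a point near the corner $b_1/(1-r)$, at Euclidean height $\asymp r^n$. Consecutive balls sit at heights differing by the factor $r$, so their hyperbolic distance is about $\log(1/r)$ plus a bounded horizontal contribution, and it grows linearly in $|n-m|$. Hence the balls $F_{w_n}(B)$, $n \geq 0$, are pairwise disjoint. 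Using all $4^n$ words of length $n$ would even give exponential growth, but a single chain already suffices.

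The main obstacle is guaranteeing this disjointness when the spacing $\log(1/r)$ is small compared with $\delta$. I handle it by shrinking $\delta$, or by taking only every $M$-th generation ($w = 1^{Mn}$ with $M$ large, so that $M\log(1/r) > 2\delta + $ the horizontal drift). Passing along a geometric subsequence of scales costs nothing, since we only need infinitely many disjoint copies of a fixed volume $v_0$. Summing over $n$, $\mathrm{vol}(CH(C_\beta)) \geq \sum_n v_0 = \infty$.

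One should also note that although $CH$ is defined in the ball model, it is invariant under isometries, so working in upper half space with $C_\beta \subset \comp$ is legitimate. This argument does not use Theorem 3.4. That is consistent with the section's aim, since for thin $\beta$ the packing sum (4) converges.
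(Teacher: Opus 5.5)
Your proof is correct, but it gets disjointness by a different mechanism than the paper.

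The paper fixes one specific ideal tetrahedron $T = CH(\mu,\nu)$ on two inner and two outer corner points of $C_\beta$. It observes that $T$ misses the open dome over the circle $A$ circumscribing $I_1$. The image of $T$ under the isometric extension $S'$ of $S\colon C_\beta \to C_\beta \cap I_1$ lies in that dome. Iterating $S'$ then gives copies of $T$ separated by hyperbolic planes.

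You instead take an arbitrary ball in the interior (via Theorem 3.2). You separate its images using only the translation length of the loxodromic map $F_1$, through the bound $d\big((z_1,t_1),(z_2,t_2)\big) \ge |\log(t_1/t_2)|$. This bound alone gives $d(F_1^n p, F_1^m p) \ge |n-m|\log(1/r)$. So the ``horizontal drift'' in your condition on $M$ is superfluous, since horizontal displacement only increases distance; $M\log(1/r) > 2\delta$ suffices.

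Your route is more robust and more general. It needs no configuration check on particular points and no separating disc. It shows verbatim that $CH(E)$ has infinite volume for any closed $E$ not contained in a circle with $f(E)\subset E$ for some contracting similarity $f$. In particular it proves Theorem 5.4 directly, bypassing the paper's extra step of enlarging the iterated function system to produce a disc $A$ avoiding $a,b,c,d$.

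The paper's route, in exchange, produces explicit disjoint copies of a concrete ideal tetrahedron whose volume is computable from its angles. It also uses every iterate of $S'$ rather than a subsequence.
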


\begin{proof} The set $C_\beta$ can be obtained from the unit square $[0,1]^2$ by first removing the complement of four squares $I_1, I_2, I_3, I_4$ (ordered from left to right, top to bottom) in the corners, and then from each removing the complement of four squares $I_{1,1}$, $I_{1,2}$, $I_{1,3}$, $I_{1,4}$, $I_{2,1}$, etc. Form the hyperbolic geodesic $\mu$ joining the two inner corners $I_1 \cap I_{1,4} \cap I_{1, 4, 4} \cap ...$ and $I_4 \cap  I_{4,1} \cap I_{4,1,1} \cap ...$ and form the hyperbolic geodesic $\nu$ joining the two outer corners $I_{2} \cap I_{2, 2} \cap I_{2,2,2} \cap...$ and $I_{3} \cap I_{3,3} \cap I_{3,3,3} \cap...$. The geodesics are indicated by the dotted segments in Figure 3. It is clear that the convex hull of these two geodesics $CH(\mu, \nu) \subset CH(C_\beta)$ is an ideal hyperbolic tetrahedron. This can be seen by the fact that the convex hull of $\nu$ and a single point on $\mu$ is a hyperbolic triangle with two vertices at infinity, and so the convex hull of $\mu$ and $\nu$ is a $1$-parameter family of such with the interior vertices lying along $\mu$. We call this tetrahedron $T$. $T$ has positive hyperbolic volume which can be computed explicitly from the ideal angles at a vertex, see \cite{miln}. 

\bigskip

 \begin{figure}[ht]
    \centering
    \def\svgwidth{.6\textwidth}
\begingroup%
  \makeatletter%
  \providecommand\color[2][]{%
    \errmessage{(Inkscape) Color is used for the text in Inkscape, but the package 'color.sty' is not loaded}%
    \renewcommand\color[2][]{}%
  }%
  \providecommand\transparent[1]{%
    \errmessage{(Inkscape) Transparency is used (non-zero) for the text in Inkscape, but the package 'transparent.sty' is not loaded}%
    \renewcommand\transparent[1]{}%
  }%
  \providecommand\rotatebox[2]{#2}%
  \newcommand*\fsize{\dimexpr\f@size pt\relax}%
  \newcommand*\lineheight[1]{\fontsize{\fsize}{#1\fsize}\selectfont}%
  \ifx\svgwidth\undefined%
    \setlength{\unitlength}{841.88976378bp}%
    \ifx\svgscale\undefined%
      \relax%
    \else%
      \setlength{\unitlength}{\unitlength * \real{\svgscale}}%
    \fi%
  \else%
    \setlength{\unitlength}{\svgwidth}%
  \fi%
  \global\let\svgwidth\undefined%
  \global\let\svgscale\undefined%
  \makeatother%
  \begin{picture}(1,0.70707071)%
    \lineheight{1}%
    \setlength\tabcolsep{0pt}%
    \put(0,0){\includegraphics[width=\unitlength,page=1]{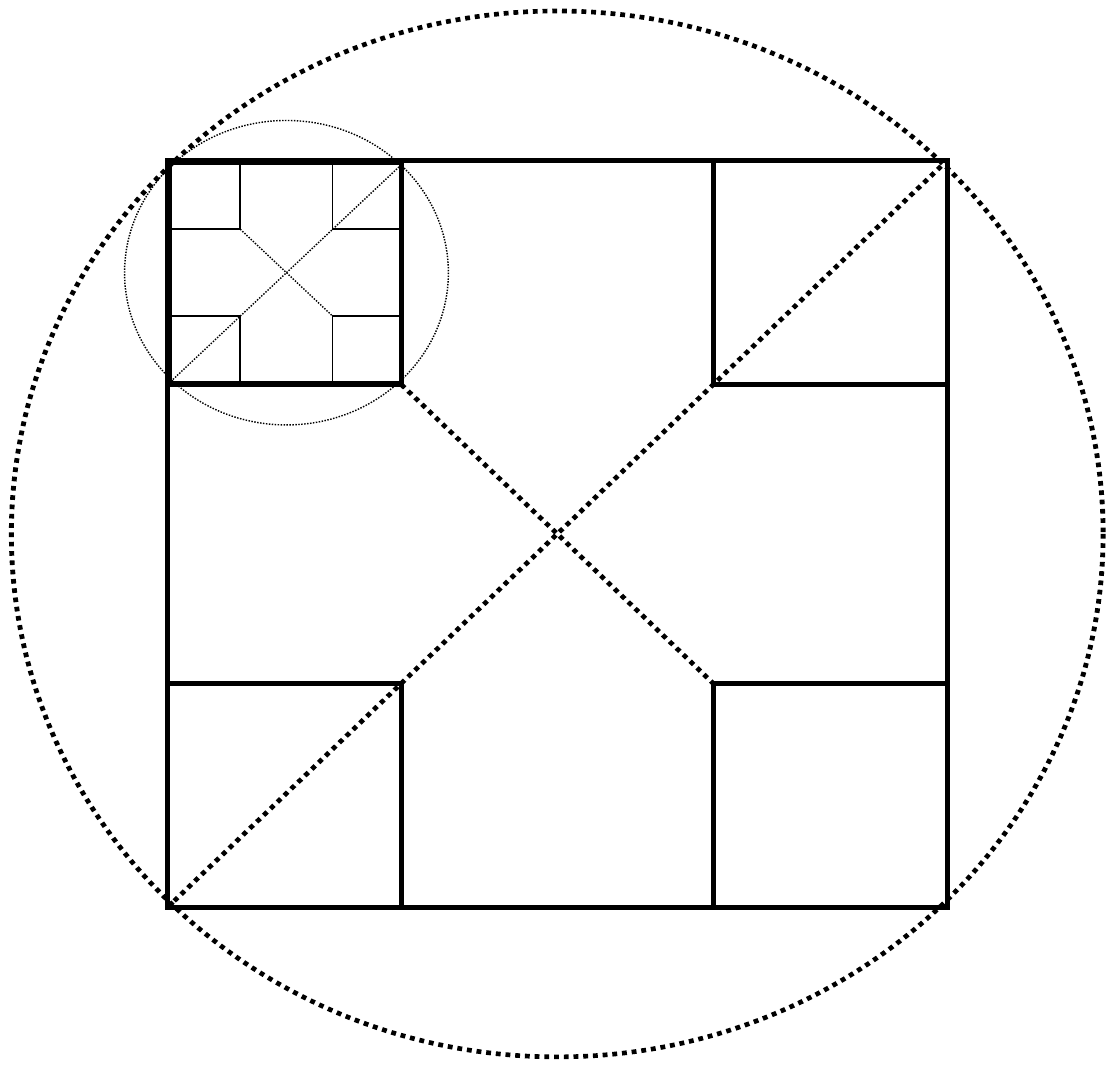}}%
    \put(0.47999193,0.07621775){\color[rgb]{0,0,0}\makebox(0,0)[lt]{\lineheight{1.25}\smash{\begin{tabular}[t]{l}$\beta$\end{tabular}}}}%
    \put(0.20992072,0.54847721){\color[rgb]{0,0,0}\makebox(0,0)[lt]{\lineheight{1.25}\smash{\begin{tabular}[t]{l}$I_1$\end{tabular}}}}%
    \put(0.74135839,0.55021956){\color[rgb]{0,0,0}\makebox(0,0)[lt]{\lineheight{1.25}\smash{\begin{tabular}[t]{l}$I_2$\end{tabular}}}}%
    \put(0.72482048,0.08921742){\color[rgb]{0,0,0}\makebox(0,0)[lt]{\lineheight{1.25}\smash{\begin{tabular}[t]{l}$I_4$\end{tabular}}}}%
    \put(0.21143016,0.09019784){\color[rgb]{0,0,0}\makebox(0,0)[lt]{\lineheight{1.25}\smash{\begin{tabular}[t]{l}$I_3$\end{tabular}}}}%
    \put(0.3731074,0.56821742){\color[rgb]{0,0,0}\makebox(0,0)[lt]{\lineheight{1.25}\smash{\begin{tabular}[t]{l}$A$\end{tabular}}}}%
  \end{picture}%
\endgroup%

    \caption{Constructing Disjoint Tetrahedra}
    \label{fig:circles}
\end{figure}

Draw the dotted circle $A$ indicated in the figure with center coinciding with the center of the upper-left corner square $I_1$ and that intersects the four corners of $I_1$, fixing the radius. Consider the hyperbolic plane bounded by $A$, a hemisphere in upper half space with boundary on the plane. Let $E$ be the open dome region in $\mathbb{H}^3$ bounded by this hemisphere and $\comp$.  By construction, $T$ does not intersect $E$. Let $S$ be the affine similarity of $\comp$ which maps $C_\beta$ onto its similar copy given by the first corner $I_1$. $S$ extends to a hyperbolic isometry $S'$ of $\mathbb{H}^3$, as $S$ is a conformal transformation of $\hat{\comp}$. Moreover by construction the image of $T$ under $S'$, call it $T'$, lies entirely in $E$. Since $S$ is a hyperbolic isometry, $T$ and $T'$ have the same hyperbolic volume. Continuing in this fashion (i.e. restricting to $I_{1,1}$ and so on) we obtain infinitely many disjoint subsets of $CH(C_\beta)$, all of the same hyperbolic volume. Therefore, $CH(C_\beta)$ has infinite hyperbolic volume.
\end{proof}

\bigskip

Given Corollary $3.6$, we know that if the $\mathcal{H}^1$-measure of $C_\beta$ is greater than zero the sufficient condition of Theorem $3.4$ is satisfied (this is shown below). We will also demonstrate that this statement is sharp. First, recall that the upper and lower Minkowski dimensions and the Hausdorff dimension of $C_\beta$ all take the same value:

$$\dim_\mathcal{H}(C_\beta) = \underline{\rm{dim}}_M(C_\beta) = \overline{\rm{dim}}_M(C_\beta) = \frac{\log(4)}{\log(2) + \log(\frac{1}{1-\beta})}$$

This formula may be obtained by computing the Minkowski dimension of the middle-$\beta$ Cantor set and using the fact that $C_\beta$ is the square of such a set \cite{bp}. This formula could also be obtained by considering $C_\beta$ as a self-similar set obeying the open set condition (see \cite{mat} \cite{bp}).

\begin{tm}
    For all $\lambda \in (0,1)$, we have 

     $$\sum_{j=1}^\infty \lambda^j P_{\lambda^j}(C_\beta) < \infty $$ if and only if $\beta > \frac{1}{2}$. 

\end{tm}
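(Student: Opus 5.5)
The plan is to reduce everything to a two-sided estimate on the packing numbers,
$$P_\epsilon(C_\beta) \asymp \epsilon^{-d}, \qquad d = \frac{\log 4}{\log 2 + \log\frac{1}{1-\beta}},$$
with constants depending only on $\beta$. Knowing only the dimension is not enough here: at $d=1$ a subexponential correction could change convergence, as in Remark 3.8. Once the estimate is in hand, the sum in Theorem 3.4 is comparable to $\sum_j \lambda^{j(1-d)}$. This is a geometric series, so it converges if and only if $d<1$, and this holds for every $\lambda\in(0,1)$ simultaneously. Finally, $d<1$ iff $\frac{2}{1-\beta} > 4$ iff $\beta > \frac12$. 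In the borderline case $\beta = \frac12$ we have $d=1$, every term is $\asymp 1$, and the sum diverges, which gives the "only if" direction there.

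To prove the packing estimate, set $r = \frac{1-\beta}{2}$. At stage $k$ of the construction there are $4^k$ closed squares of side $r^k$, each containing a point of $C_\beta$. Two distinct stage-$k$ squares are separated by Euclidean distance at least $\beta r^{k-1}$, namely the gap inside their last common parent. Note that $r^{-d} = 4$, so $4^k = (r^k)^{-d}$. Given $\epsilon$ small, choose $k$ with $r^{k+1} \le \epsilon < r^k$.

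For the lower bound, fix an integer $c\ge 1$ depending only on $\beta$ so that $\beta r^{k-c-1} > 2 r^k > 2\epsilon$. Pick one point of $C_\beta$ in each of the $4^{k-c}$ stage-$(k-c)$ squares. The $\epsilon$-balls around these points are pairwise disjoint, so $P_\epsilon \ge 4^{-c}4^k \gtrsim \epsilon^{-d}$.

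For the upper bound, take a maximal disjoint family of $\epsilon$-balls centered in $C_\beta$. Each ball is contained in the $2\epsilon$-neighborhood of some stage-$k$ square. Such a neighborhood has area at most $(r^k + 4\epsilon)^2 \lesssim \epsilon^2$, because $r^k \le \epsilon/r$. Since each square's neighborhood contains at most $O(1)$ disjoint balls of area $\pi\epsilon^2$, summing over the $4^k$ squares gives $P_\epsilon \lesssim 4^k \lesssim \epsilon^{-d}$.

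The main point needing care is this uniform two-sided comparison, in particular the separation constant $c$ in the lower bound, which depends on $\beta$ but not on $\epsilon$. Everything after it is the elementary series comparison described in the first paragraph.
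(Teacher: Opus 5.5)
Your proof is correct and follows the same route as the paper: establish $P_{\lambda^j}(C_\beta)\asymp \lambda^{-jd}$ with $d=\log 4/(\log 2+\log\frac{1}{1-\beta})$, then compare with the geometric series $\sum_j \lambda^{j(1-d)}$, which converges if and only if $d<1$, that is, if and only if $\beta>\frac{1}{2}$. The only difference is in how the packing estimate is obtained: the paper cites it from the agreement of the dimensions and positivity of $\mathcal{H}^d(C_\beta)$, whereas you derive both bounds directly from the construction (the $\beta r^{k-1}$ separation for the lower bound, area counting for the upper bound), which is more self-contained and rightly addresses the borderline case $d=1$, where equal dimensions alone would not suffice.
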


\begin{proof} Suppose $\beta \leq \frac{1}{2}$. Observe that since the upper and lower Minkowski dimensions and the Hausdorff dimension of $C_\beta$ all agree and $C_\beta$ has positive measure in that dimension, we have that as $j \rightarrow \infty$ the packing numbers obey $P_{\lambda^j}(C_\beta) \gtrsim (\frac{1}{\lambda^j})^{\dim_\mathcal{H}(C_\beta)} \asymp \lambda^{\delta j}$ for some $\delta \geq 1$. Therefore, for all $\lambda \in (0,1)$ the sum 

 $$\sum_{j=1}^\infty \lambda^j P_{\lambda^j}(C_\beta) \gtrsim \sum_{j=1}^\infty \lambda^{j(1 - \delta)} =  \infty $$
 
Since $\delta \geq 1$ and $\lambda \in (0,1)$. Now suppose $\beta > \frac{1}{2}$ and let $\lambda \in (0,1)$. Once again, since all of the notions of dimension agree and $C_\beta$ has positive Hausdorff measure in that dimension, we have that as $j\rightarrow \infty$ the packing numbers obey $P_{\lambda^j}(C_\beta) \lesssim (\frac{1}{\lambda^j})^{\dim_\mathcal{H}(C_\beta)} \asymp \lambda^{-\delta j}$ for some $\delta < 1$. Thus, we have for all $\lambda \in (0,1)$ that 

 $$\sum_{j=1}^\infty \lambda^j P_{\lambda^j}(C_\beta) \lesssim \sum_{j=1}^\infty \lambda^{j(1 - \delta)} < \infty $$

 by the geometric series formula as $\delta < 1$ and $\lambda \in (0,1)$. 

 \end{proof}

Thus, all Cantor bridge sets have infinite hyperbolic volume, but precisely half of these sets are the hyperbolic convex hull of a Cantor set which obeys the sufficient condition of Theorem 3.4. In particular, Theorem 3.4 is not a necessary condition. 

\section{General Self-Similar Sets}

The arguments given above for Cantor Bridge Sets apply in greater generality to the larger class of self similar sets. In this section, we give sufficient conditions for self similar sets to have infinite hyperbolic convex hull volumes. Recall the definition of a self-similar set \cite{mat}.

\begin{defi}
    A compact set $K \subset \comp$ is called $\emph{self-similar}$ if there exists a finite set of contracting similarities $S_i: \comp \rightarrow \comp$, $1 \leq i \leq N$ so that 

   $$K = \bigcup_{i=1}^N S_i(K) $$
    
\end{defi}

We wish to find conditions on $K$ which ensure that it is suitably $``$planar", so that arguments from Section 4 will apply. 

\begin{defi}
    Given $a,b,c,d \in \hat{\comp}$, the $\emph{cross ratio}$ of these four points, $C(a,b,c,d)$ is defined to be $M(c)$, where $M$ is the unique M\"{o}bius transformation so that $M(a) = 0$, $M(b) = 1$, and $M(d) = \infty$. 
\end{defi}

In particular, note that $C(a,b,c,d)$ is real if and only if the four points $a,b,c,d$ all lie on a circle in $\hat{\comp}$. 

\begin{lm}
    Let $a,b,c,d \in \hat{\comp}$ be such that $C(a,b,c,d) \notin \mathbb{R}$. Then $CH(\{a,b,c,d\})$ is an ideal hyperbolic tetrahedron with positive volume. 
\end{lm}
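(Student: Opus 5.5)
The plan is to normalize by a M\"{o}bius transformation and then read off the hull directly. Let $M$ be the M\"{o}bius transformation with $M(a)=0$, $M(b)=1$, $M(d)=\infty$, so that $M(c)=z:=C(a,b,c,d)$, and by hypothesis $\mathrm{Im}\, z \neq 0$. Since $M$ extends to a hyperbolic isometry of $\mathbb{H}^3$ and isometries map geodesics to geodesics and convex sets to convex sets, $CH(\{a,b,c,d\})$ is isometric to $CH(\{0,1,z,\infty\})$. It therefore suffices to treat the four points $0,1,z,\infty$ with $z \notin \mathbb{R}$. Note also that the four points are distinct: if two coincided, the cross ratio would be $0$, $1$ or $\infty$, or undefined, and in particular not a non-real complex number.

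Next I will identify the hull in the upper half space model. The geodesics ending at $\infty$ are the vertical lines over $0,1,z$. The geodesics among $0,1,z$ are semicircles orthogonal to $\comp$. Let $\Delta\subset\comp$ be the Euclidean triangle with vertices $0,1,z$; it is nondegenerate because $z\notin\mathbb{R}$. Let $P$ be the region lying above the three hemispheres (totally geodesic planes) spanned by the pairs of sides, over $\Delta$, and bounded laterally by the three vertical half-planes over the sides of $\Delta$.

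I claim $P$ is exactly the intersection of four closed half-spaces bounded by totally geodesic planes. These are the three vertical planes through the sides of $\Delta$ (taking the side containing $\Delta$) and the hemisphere through $0,1,z$ (taking the outside). Hence $P$ is convex. Each of its faces is an ideal triangle spanned by geodesics between the given points, so $P \subset CH$. Conversely, $CH$ lies in every such half-space, because each half-space is convex and contains all four points in its closure at infinity. Thus $CH(\{0,1,z,\infty\})=P$, which is an ideal tetrahedron. Equivalently, one can use the same one-parameter-family-of-triangles argument from Lemma 4.1: the hull of the geodesic from $0$ to $\infty$ and the geodesic from $1$ to $z$ is a tetrahedron, and it already contains all six edges.

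For positive volume, I will argue as follows. $P$ contains the open set $\{(w,t): w\in \mathrm{int}\,\Delta,\ t>T\}$ for $T$ larger than the radius of the hemisphere through $0,1,z$, and open sets have positive hyperbolic volume. One can even compute it: the volume is $\int_\Delta\int_{h(w)}^\infty t^{-3}\,dt\,dA$, which is finite and positive, and equals the Lobachevsky-function formula $\Lambda(\alpha)+\Lambda(\beta)+\Lambda(\gamma)$ in the angles of $\Delta$ (Milnor \cite{miln}). Alternatively, Theorem 3.2 gives nonempty interior directly, since the four points are not on a circle.

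The main obstacle is the identification of the hull with the four-half-space region, specifically the reverse inclusion. That step is handled by the general fact that the hull is the intersection of the closed half-spaces whose boundary at infinity contains the points. Everything else is routine.
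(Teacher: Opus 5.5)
Your proof is correct, and it takes a more explicit route than the paper's. The paper argues in two lines. Since $C(a,b,c,d)\notin\mathbb{R}$, the four points are not concyclic, so Theorem 3.2 gives $CH(\{a,b,c,d\})$ nonempty interior and hence positive volume. The paper then simply asserts that the hull of four ideal points is an ideal tetrahedron, non-degenerate because its volume is positive.

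You do three things instead:
\begin{itemize}
\item You normalize by a M\"obius map to $\{0,1,z,\infty\}$ with $\mathrm{Im}\,z\neq 0$.
\item You identify the hull with the intersection of four closed half-spaces: the three bounded by the vertical planes over the sides of $\Delta$ (containing $\Delta$), and the exterior of the hemisphere over the circumcircle of $0,1,z$. You prove both inclusions.
\item You read off positive volume from the explicit open set $\{(w,t): w\in\mathrm{int}\,\Delta,\ t>T\}$, with Milnor's formula as a bonus.
\end{itemize}
Your route actually proves the tetrahedral structure, which the paper takes for granted, and positivity of volume no longer relies on Theorem 3.2. Your alternative ``by Theorem 3.2'' is precisely the paper's argument for that part. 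The paper's route buys brevity and reuse of an earlier result; it infers non-degeneracy from positive volume, whereas you go the other way.

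Two small fixes are needed.
\begin{enumerate}
\item Your first description of $P$ refers to ``three hemispheres spanned by the pairs of sides.'' There is only one such hemisphere, the one over the circumcircle of $0,1,z$, and it contains all three semicircular edges. Your next paragraph states this correctly, so only the wording needs changing.
\item The step ``each face is an ideal triangle in $CH$, so $P\subset CH$'' needs one more line. For example, the closure of $P$ meets the sphere at infinity only in the four vertices. So any geodesic through an interior point of $P$ that avoids those vertices exits $P$ through faces in both directions. The point therefore lies on a segment joining two points of $\partial P\subset CH$, and convexity finishes.
\end{enumerate}
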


\begin{proof} Let $a,b,c,d$ be as above. $C(a,b,c,d) \notin \mathbb{R}$ implies that $a,b,c,d$ do not lie in a circle in $\hat \comp$. By Theorem $3.2$, $CH(\{a,b,c,d\})$ has nonempty interior, and therefore has positive hyperbolic volume. Moreover, by hyperbolic convex hull of four points is an ideal hyperbolic tetrahedron which in this case is non-degenerate, as it has positive volume. 
\end{proof} 
\noindent We now prove a generalization of Lemma 4.1:

\begin{tm}
    Let the compact set  $K \subset \comp$ be a self-similar set with similarities $S_i$, $1 \leq i \leq N$. Suppose there exist $a,b,c,d \in K$ with $C(a,b,c,d) \notin \mathbb{R}$. Then $CH(K)$ has infinite hyperbolic volume.
\end{tm}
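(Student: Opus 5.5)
By Lemma 5.3 the four points $a,b,c,d\in K$ with $C(a,b,c,d)\notin\mathbb{R}$ span an ideal tetrahedron $T=CH(\{a,b,c,d\})\subset CH(K)$ with volume $v>0$. The plan is to generalize Lemma 4.1: produce infinitely many pairwise disjoint isometric copies of $T$, all inside $CH(K)$. Once that is done, $CH(K)$ has infinite volume.

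The copies come from iterating one contraction. Fix $S=S_1$. Since $S(K)\subset K$, the sets $S^n(K)$ form a nested sequence in $K$, and since $S$ is a contraction their diameters tend to $0$. Each $S^n$ is a similarity of $\comp$, hence a M\"obius map fixing $\infty$, so it extends to a hyperbolic isometry $(S^n)'$ of $\mathbb{H}^3$. Because $S^n(a),\dots,S^n(d)\in K$, we get
$$(S^n)'(T)=CH(\{S^n a,S^n b,S^n c,S^n d\})\subset CH(K),$$
and each of these copies has volume $v$.

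The remaining step, which I expect to be the main obstacle, is disjointness. Unlike the Cantor case, the copies need not be separated by the nice circle $A$ used in Lemma 4.1. I would avoid this by working with compact pieces and passing to a subsequence. Choose a compact $T_0\subset T$ with $\mathrm{vol}(T_0)\ge v/2$, for instance the truncation of $T$ by a large hyperbolic ball around an interior point. Let $p$ be the fixed point of $S$. Conjugating by the affine map $z\mapsto z-p$, $S$ becomes $z\mapsto rz$ composed with a rotation, where $0<r<1$. Its extension to upper half space is $(z,t)\mapsto(\text{rotation}(rz),\,rt)$, a loxodromic isometry with axis the vertical geodesic over $p$ and translation length $\log(1/r)>0$.

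Hence $(S^n)'(T_0)$ is $T_0$ pushed a hyperbolic distance of about $n\log(1/r)$ along this axis, toward the endpoint $p$. Choose $m$ with $m\log(1/r)>2\,\mathrm{diam}(T_0)+2\,d(T_0,\text{axis})$. Then the compact sets $(S^{km})'(T_0)$, $k\ge 0$, are pairwise disjoint: the projections onto the axis of distinct copies are separated by more than their diameters. Concretely, the copies lie in disjoint Euclidean height bands $t\in[r^{km}t_1,\,r^{km}t_2]$ over a shrinking region near $p$, and this can be checked directly from the explicit formula. Each copy has volume at least $v/2$, so
$$\mathrm{vol}(CH(K))\ \ge\ \sum_{k\ge 0}\frac{v}{2}=\infty.$$

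For a cleaner geometric picture one can also argue as in Lemma 4.1: for large $n$, $S^n(K)$ lies in a tiny disk $B$ about $p$, so the copy $(S^n)'(T)$ lies in the dome over $B$. But $T$ itself is not contained in that dome (it is not contained in any dome), so it is better to use the compact truncations $T_0$ and the height bands above.
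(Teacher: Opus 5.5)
Your proof is correct, but it gets disjointness of the copies of $T$ by a different mechanism than the paper.

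\textbf{The paper's route.} It keeps whole tetrahedra and separates them by hyperbolic planes. It picks $p\in K\setminus\{a,b,c,d\}$ and follows an itinerary of $p$ to a composite similarity $S_{i_M}\circ\cdots\circ S_{i_1}$. This map sends $K$ into a small closed disc $A$ whose interior misses $a,b,c,d$, and it is adjoined to the list of similarities. The paper then notes that $T$ avoids the open dome over $A$, while the later iterates of $T$ lie in the dome over $A$, and the same holds at each subsequent scale.

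\textbf{Your route.} You take an arbitrary $S_1$ and use the translation of its extension along its axis. After truncating $T$ to a compact $T_0$, the copies $(S^{km})'(T_0)$ lie in disjoint height bands once $r^m t_2<t_1$. Your choice of $m$ guarantees this, because $\log(t_2/t_1)\le \mathrm{diam}(T_0)$; in fact $m\log(1/r)>\mathrm{diam}(T_0)$ already suffices.

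\textbf{Comparison.} Your argument needs no auxiliary point, no itinerary and no change of the similarity system. It also proves something slightly more general: any closed $K$ not on a circle with $S(K)\subset K$ for a single contracting similarity $S$ has infinite hull volume. For this, take $T_0$ to be a small ball in the interior of $CH(K)$, which exists by Theorem 3.2. The paper's route gives full rather than truncated copies of $T$ and a picture parallel to Lemma 4.1, at the cost of the extra construction.

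\textbf{A correction to your closing remark.} $T$ is certainly contained in some domes. The real obstruction to running the dome argument with $S_1$ is that its fixed point $p$ might be one of $a,b,c,d$. In that case $T$ has an ideal vertex at $p$ and meets the dome over every disc about $p$. This is exactly why the paper chooses $p$ distinct from $a,b,c,d$.

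If orientation-reversing similarities are allowed, nothing in your argument changes: $z\mapsto\rho\bar z+b$ extends to the isometry $(z,t)\mapsto(\rho\bar z+b,|\rho|t)$.
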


 \begin{proof}We begin by proving the following weaker statement: let the compact set  $K \subset \comp$ be a self-similar set with similarities $S_i$, $1 \leq i \leq N$. Suppose there exist $a,b,c,d \in K$ with $C(a,b,c,d) \notin \mathbb{R}$. Suppose also that there exists $j \in [1,N]$ and a closed disc $A$ so that $S_j(K) \subset A$ but $a,b,c,d \notin \ $$ \rm{int}$$(A)$. Then $CH(K)$ has infinite hyperbolic volume. $CH(\{a,b,c,d\})$ is a hyperbolic tetrahedron $T$ of positive volume by Lemma 5.3. Let $U$ be the interior of the dome region in $\mathbb{H}^3$ with boundary $A \cup CH(\partial A)$. Because $a,b,c,d \notin \ $$\rm{int}$$(A)$, $T$ does not intersect $U$. Let $S_j'$ be the isometry induced on $\mathbb{H}^3$ by the (conformal) map $S_j$. Since $a,b,c,d \in K$ and $CH(S_j(K)) \subset U$ by construction, $S_j'(T)$ does not intersect $T$. Since $S_j'$ is an isometry, $S_j'(T)$ has hyperbolic volume equal to that of $T$.  Moreover, replacing $K$ with $S_j(K)$, $A$ with $S_j(A)$, and $a,b,c,d$ with $S_j(a), S_j(b), S_j(c), S_j(d)$ we may repeat the construction above to obtain infinitely many ideal hyperbolic tetrahedra of positive volume, all of which lie within $CH(K)$ as $K$ is self similar. All of these ideal hyperbolic tetrahedra are disjoint, as the iterates $S_j^{(n)}(a), S_j^{(n)}(b), S_j^{(n)}(c), S_j^{(n)}(d)$ do not belong to the interior of the $n$'th disc $S_j^{(n)}(A)$. 

\bigskip

We now demonstrate that such an $A$ always exists. Consider first a disc $D$ containing $K$. Such a disc is mapped under $S_j$, $1 \leq j \leq N$, to a disc containing $S_j(K)$. Since there is at least one similarity defining $K$, we may find a point $p \in K$ distinct from $a,b,c,d$. This point $p$ is the unique element in the countable intersection of images of $K$ under an $\emph{itinerary}$. More specifically, there exists a sequence $i_1,i_2,i_3,... \in  \{1,...,N\}$ so that $$p \in \bigcap_{j =1}^\infty S_{i_j}(K)$$

Recall, $p$ is unique because each of the $S_j(K)$ is a compact set, and as the contraction ratios are fixed, these compact sets have diameter shrinking to zero. Therefore, we may find arbitrarily small discs $D_m = S_{i_m} \circ ... \circ S_{i_1}(D)$ which contain $S_{i_m} \circ ... \circ S_{i_1}(K)$ and in particular contain $p$. Pick $M >> 1$ so that the diameter of $D_M$ is strictly less than the minimum distance among $|p-a|, |p-b|, |p-c|, |p-d|$. Then $\rm{int}$$(D_M)$ may not contain $a,b,c,d$ by definition. Without changing $K$, we may modify the collection of similarities defining $K$ to include $S_{i_m} \circ ... \circ S_{i_1}$, and we have the desired disc $A$. 
\end{proof}

\bigskip

There are therefore many self-similar sets whose hyperbolic convex hull volumes are infinite: It suffices to find a self similar set $K$ which contains four points whose cross-ratio has nonzero imaginary part. Indeed, by the Banach Contraction Mapping Principle, any finite set of contracting similarities has a unique attractor to which all compact sets tend under $K \rightarrow \bigcup_{i = 1}^N S_i(K)$. In particular, by considering one quadruple of points in $\comp$ with non-real cross ratio and one contracting similarity $S: \comp \rightarrow \comp$, we can build a $\emph{countable}$ set whose hyperbolic convex hull has infinite volume. Furthermore, the examples in Section 4 demonstrate that the sufficient condition of Theorem 3.4 may or may not be satisfied for self-similar sets. 

\bigskip

It is easy to verify for a plane set $K \subset \comp$ that if for all quadruples $(a,b,c,d) \in K \times K \times K \times K$ the cross ratios $CH(a,b,c,d) \in \mathbb{R}$, then $K$ is a subset of a circle or a line (i.e. a circle in $\hat{\comp}$). Thus we obtain a characterization of self-similar sets whose hyperbolic convex hull volumes are infinite:

\begin{cor}
    The volume of the hyperbolic convex hull of a self-similar set $K \subset \hat{\comp}$ is either zero or infinity, and is zero if and only if $K$ lies on a circle. 
\end{cor}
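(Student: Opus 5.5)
The plan is to split into two cases according to whether $K$ lies on a circle in $\hat{\comp}$, using Theorem 3.2 for one case and Theorem 5.4 for the other.

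\emph{Case 1: $K$ lies on a circle $C$.} Here I invoke Theorem 3.2 directly. Since $K \subset C$, we have $CH(K) \subset CH(C)$, a totally geodesic hyperbolic plane, which has zero three-dimensional hyperbolic volume. Hence the volume of $CH(K)$ is zero.

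\emph{Case 2: $K$ does not lie on a circle.} First reduce to the case $K \subset \comp$. Self-similar sets are compact subsets of $\comp$ by Definition 5.1. If one allows $K \subset \hat{\comp}$ with $\infty \in K$, a M\"obius isometry does not preserve self-similarity, so I would interpret the statement via Definition 5.1 as $K \subset \comp$.

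The key step is to produce four points $a,b,c,d \in K$ with $C(a,b,c,d) \notin \real$. I argue by contrapositive.
\begin{itemize}
\item Since $K$ is not on a circle, $K$ has at least three points $a,b,c$. Let $C_0$ be the unique circle in $\hat{\comp}$ through them.
\item Because $K \not\subset C_0$, there is a point $d \in K \setminus C_0$.
\item Let $M$ be the M\"obius map with $M(a)=0$, $M(b)=1$, $M(d)=\infty$ and consider $M(c)$. Real cross ratio is equivalent to concyclicity of the four points (the remark after Definition 5.2). The circle through $a,b,c$ is $C_0$, which does not contain $d$, so the four points are not concyclic.
\item The ordering convention in Definition 5.2 needs care here: the definition sends $a,b,d$ to $0,1,\infty$ and evaluates at $c$. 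Concyclicity is symmetric in the four points, so any ordering gives a non-real value, and $C(a,b,c,d)\notin\real$.
\end{itemize}
Theorem 5.4 then applies and gives that $CH(K)$ has infinite volume.

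Combining the two cases shows the volume is always $0$ or $\infty$, and is $0$ exactly when $K$ lies on a circle. Both cases are exhaustive and their conclusions are mutually exclusive, so this gives the stated characterization.

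The main obstacle is not in this corollary but in Theorem 5.4, which is assumed here. The only substantive checks in the corollary itself are:
\begin{itemize}
\item the concyclicity argument above, using that three distinct points determine a unique circle in $\hat{\comp}$;
\item the degenerate situations where $K$ has fewer than four points. These automatically lie on a circle and fall into Case 1.
\end{itemize}
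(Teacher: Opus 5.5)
Your proposal is correct and follows the paper's route: the circle case is handled by Theorem 3.2, since $CH(K)$ then lies in a totally geodesic plane, and otherwise you extract four points of $K$ with non-real cross ratio and invoke Theorem 5.4. Your explicit argument, taking the circle through three points of $K$ plus a fourth point of $K$ off it and noting that concyclicity is symmetric so the ordering in Definition 5.2 is harmless, simply supplies the step the paper dismisses as ``easy to verify.''
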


\end{document}